\newtheorem{theorem}{Theorem}[section]
\newtheorem{proposition}[theorem]{Proposition}
\newtheorem{lemma}[theorem]{Lemma}
\newtheorem{corollary}[theorem]{Corollary}
\newtheorem{remark}{Remark}
\newcommand{\MW}{Milnor-Witt\ }
\newcommand{\rMW}{\mathrm{MW}}
\newcommand{\KMW}{\mathrm{K}^\mathrm{MW}}
\newcommand{\bb}[1]{\mathbb{#1}}
\newcommand{\wt}[1]{\widetilde{#1}}
\newcommand{\af}{\mathbb{A}_K}
\newcommand{\bZ}{\mathbb{Z}}
\newcommand{\mrm}[1]{\mathrm{#1}}
\newcommand{\inv}{^{-1}}
\newcommand{\Gm}{\bb{G}_m}
\newcommand{\DMt}{\widetilde{\mathrm{DM}}(K)}
\newcommand{\Mt}{\wt{\mrm{M}}}
\newcommand{\HH}{\mathrm{H}_{\rMW}}
\newcommand{\sBra}[1]{\left[#1\right]}
\newcommand{\aBra}[1]{\left<#1\right>}
\newcommand{\rHom}{\mathrm{Hom}}
\newcommand{\xr}[1]{\xrightarrow{#1}}
\newcommand{\one}{\mathbbm 1}
\title[MW-motivic cohomology and hyperplane arrangements]{Milnor-Witt motivic cohomology of complements of hyperplane arrangements}
\author{Keyao Peng}
\address{Institut Fourier, Universit\'e Grenoble-Alpes, CS40700, 38058 Grenoble Cedex 9, France}
\date{}
\begin{document}
\maketitle

\begin{abstract}
In this paper, we compute the (total) Milnor-Witt motivic cohomology of the complement of a hyperplane arrangement in an affine space as an algebra with given generators and relations. We also obtain some corollaries by realization to classical cohomology.
\end{abstract}

% keywords can be removed

\section{Introduction}

Let $K$ be a perfect field of characteristic different from $2$, and let $ U \subset \af^N $ be the complement of a finite union of hyperplanes. For $K=\bb{R}$ the cohomology ring $\mrm{H}^*_{\mrm{sing}}(U(\bb{R}),\bZ)$ is just the direct sum of $\bZ$ corresponding to each regions(connected components), and those regions form a poset. In the special case when the hyperplanes arise from a root system, the resulting poset is the corresponding Weyl group with the weak Bruhat order. In general, the poset of regions is ranked by the number of separating hyperplanes and its Möbius function has been computed \cite{10.2307/1999150}.

For any essentially smooth scheme $X$ over $K$ and any integers $p,q\in\bZ$, one can define the Milnor-Witt motivic cohomology groups $\HH^{p,q}(X,\bZ)$ introduced in \cite{MW2020}. There are homomorphisms (functorial in $X$) for any $p,q\in \bZ$
\[
\HH^{p,q}(X,\bZ)\to \mathrm{H}_{\mathrm{M}}^{p,q}(X,\bZ)
\]
where the right-hand side denotes the ordinary motivic cohomology defined by Voevodsky. 

As illustrated by the list of properties in the following section, the Milnor-Witt motivic cohomology groups behave in a fashion similar to ordinary motivic cohomology groups, but there are central differences (for instance, there are no reasonable Chern classes). 

In this paper, we compute the total Milnor-Witt cohomology ring of the complement of a hyperplane arrangement in affine spaces $\HH(U)$ using methods very similar to \cite{math/0601737}, with some necessary modifications. To state our main result, we first recall a few facts.

Let $R$ be a commutative ring. The Milnor-Witt $K$-theory of $R$ is defined to be the graded algebra freely generated by elements of degree $1$ of the form $[a], a\in R^\times$ and an element $\eta$ in degree $-1$, subject to the relations
\begin{enumerate}
\item $[a][1-a]=0$ for any $a$ such that $a, 1-a\in R^\times \setminus \{1\}$.
\item $[ab]=[a]+[b]+\eta[a][b]$ for any $a,b\in R^\times$.
\item $\eta[a]=[a]\eta$ for any $a\in R^\times$.
\item $\eta(2+\eta[-1])=0$.
\end{enumerate}
It defines a presheaf on the category of schemes over a perfect field $K$ via $X\mapsto \mathrm{K}_*^{\rMW}(\mathcal{O}(X))$. On the other hand, one can also consider the Milnor-Witt motivic cohomology (bigraded) presheaf
\[
X\mapsto \HH(X):= \oplus _{p,q}\mathrm{H}^{p,q}_{\rMW}(X,\bb{Z})
\]
By \cite[Theorem 4.2.2]{Deglise16}, there is a morphism of presheaves
\[
s:\bigoplus_{n\in\bb{Z}}\mathrm{K}_n^{\rMW}(-)\to \bigoplus_{n\in\bb{Z}}\mathrm{H}_{\rMW}^{n,n}(-,\bb{Z})\subset \HH(X)
\]
which specializes to the above isomorphism if $X=\mathrm{Spec}(F)$ where $F$ is a finitely generated field extension of $K$ \cite{Calmes17a}. 

\begin{theorem}\label{thm:mainthm}
	Let $ K $ be a perfect field of characteristic different from $2$ and let $U\subset \bb{A}^N_K$ be the complement of a finite union of hyperplanes. There is an isomorphism of $\HH(K)$-algebras
	\[ \HH(K)\{\Gm(U)\}/J_U \cong 
	\HH(U)\]
	 defined by mapping $ (f)  \in \Gm(U) $ to the class $[f]$ in $\HH^{1,1}(U,\bb{Z})$ corresponding to $f$ under $s$. Here, $\HH(K)\{\Gm(U)\}$ is the free (associative) graded $\HH(K)$-algebra generated by $\Gm(U)$ in degree $(1,1)$ and $J_U$ is the ideal generated by the following elements:
	 \begin{enumerate}[label=(\arabic*)]
	 	\item $(f)-[f]$, if $ f \in K^{\times}\subset \Gm(U) $,
	 	\item $ (f)+(g)+\eta (f)(g)- (fg) $, if $ f,g\in \Gm(U)$,
	 	\item $(f_1)(f_2)\cdots(f_t)$, for any $f_1,\ldots,f_t\in \Gm(U)$ such that $\sum_{i=1}^t f_i =1 $,
	 	\item $(f)^2-[-1](f)$, if $ f \in\Gm(U)$. 
	 \end{enumerate}
\end{theorem}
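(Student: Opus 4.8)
The plan is to prove the isomorphism $\phi\colon \HH(K)\{\Gm(U)\}/J \to \HH(U)$ by induction on the number $n$ of hyperplanes, comparing a geometric localization long exact sequence with an algebraic deletion--restriction short exact sequence, in the spirit of the classical Orlik--Solomon computation adapted to \cite{math/0601737}. Write $A=A(\mathcal A)$ for the source algebra attached to the arrangement $\mathcal A=\{H_1,\dots,H_n\}$, with $H_i=\{l_i=0\}$. The base case $n=0$ is $U=\bb A^N_K$: homotopy invariance gives $\HH(\bb A^N_K)\cong\HH(K)$, while $\Gm(\bb A^N_K)=K^\times$, so relation (1) collapses $A$ onto $\HH(K)$ and $\phi$ is the identity. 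Before running the induction I would check that $\phi$ is a well-defined homomorphism of $\HH(K)$-algebras, i.e. that the classes $[f]=s((f))$ satisfy (1)--(4) in $\HH(U)$. Relations (1), (2) and (4) are exactly the defining identities of Milnor--Witt $K$-theory --- in particular $[f]^2=[-1][f]$ --- so they hold by functoriality of $s$ and the ring structure of $\HH$. Relation (3), the Orlik--Solomon/Steinberg relation $[f_1]\cdots[f_t]=0$ when $\sum_i f_i=1$, is the genuine geometric input: for $t=2$ it is the Steinberg relation $[f][1-f]=0$, and for general $t$ I would obtain it by factoring $[f_1]\cdots[f_t]$ through the universal complement $\{\sum x_i=1\}\setminus\bigcup_i\{x_i=0\}$ and invoking the vanishing of the corresponding class there, established within the same induction.

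For the inductive step fix $H=H_n$ and set $U'=\bb A^N_K\setminus\bigcup_{i<n}H_i$, so that $U=U'\setminus H$ and $Z:=H\cap U'$ is the complement of the restricted arrangement inside $H\cong\bb A^{N-1}_K$; both carry strictly fewer hyperplanes than $\mathcal A$. The decisive geometric point is that $H$ is \emph{linear}, so its normal bundle in $U'$ is trivial and the Gysin/purity triangle $\Mt(U)\to\Mt(U')\to\Mt(Z)(1)[2]\to$ in $\DMt$ carries no twist by $\det N$ --- this is exactly what keeps the Milnor--Witt computation as clean as the classical one, and verifying the absence of an $\aBra{\cdot}$-twist at each stage is a point I would treat with care. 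Applying $\rHom(-,\bb Z(q)[p])$ yields the localization long exact sequence
\[
\cdots\to \HH^{p-2,q-1}(Z)\xrightarrow{\gy}\HH^{p,q}(U')\to \HH^{p,q}(U)\xrightarrow{\partial}\HH^{p-1,q-1}(Z)\to\cdots,
\]
whose boundary $\partial$ is the residue along $H$: it is the twisted $\HH(K)$-derivation with $\partial([l_n])=1$ and $\partial([l_i])=0$ for $i<n$. By induction $\HH(U')\cong A'=A(\mathcal A')$ and $\HH(Z)\cong A''=A(\mathcal A'')$.

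On the algebraic side I would produce the matching short exact sequence of $\HH(K)$-modules
\[
0\to A'\xrightarrow{\iota} A\xrightarrow{\mu} A''\to 0,
\]
where $\iota$ is induced by the inclusion of the generators of the subarrangement and $\mu$ is the residue derivation determined by $(l_n)\mapsto1$, $(l_i)\mapsto0$ ($i<n$). Here relation (4) guarantees that $A=\iota(A')\oplus(l_n)\cdot A$ has the expected size (the $(l_n)$-part does not generate a polynomial algebra), and relation (3) is precisely what forces $(l_n)\cdot A\xrightarrow{\mu}A''$ to be an isomorphism: the new incidences created on $H$ appear as vanishing products $(l_n)(f_{i_1})\cdots$ in $A$ and reproduce exactly the defining relations of $A''$. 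Granting this, I would check that $\iota$ and $\mu$ commute through $\phi$ with the geometric maps, observe that $\partial$ is surjective since $1=\partial([l_n])$ and $[l_i|_Z]=\partial([l_n][l_i])$ generate $\HH(Z)$ by induction, so that the long exact sequence degenerates into short exact sequences, and conclude $\HH(U)\cong A$ by the five lemma.

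I expect the main obstacle to be the purely algebraic deletion--restriction statement together with its compatibility with $\partial$. Unlike the exterior Orlik--Solomon algebra, the Milnor--Witt model is noncommutative and $\eta$-deformed, so exhibiting an $\HH(K)$-basis adapted to the filtration by hyperplanes, proving that relation (3) cuts $(l_n)\cdot A$ down to exactly $A''$, and identifying the algebraic residue $\mu$ with the geometric residue $\partial$ (including the correct $\aBra{-1}$-signs coming from graded commutativity) all demand genuine bookkeeping rather than a formal transport of the classical argument. Controlling the universal vanishing behind relation (3) for $t\geq3$ is the second delicate ingredient, and I would organize both so that they are resolved simultaneously with the induction.
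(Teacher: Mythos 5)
There is a genuine gap, and it sits exactly at the two points you yourself flag as ``delicate'' but do not resolve. First, your claim that relation (4), $[f]^2=[-1][f]$, holds ``by functoriality of $s$ and the ring structure'' is false: it is \emph{not} a defining identity of Milnor--Witt $K$-theory of a ring. The presentation of $\KMW_*(\cO(U))$ only contains the Steinberg relation $[a][1-a]=0$ when both $a$ and $1-a$ are units, and the standard derivation of $[f]^2=[-1][f]$ from it passes through $[f][-f]=0$, which for a general unit $f\in\Gm(U)$ (with $1-f$ possibly not invertible) is unavailable. The paper accordingly does \emph{not} get relation (4) for free: it proves it geometrically (Corollary \ref{cor:relations}(2)), deriving it from the vanishing of $R(f,af,bf)$ with $1+a+b=0$, which in turn needs the full machinery below.

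Second, and more seriously, your treatment of relation (3) for $t\geq 3$ --- ``factor through the universal complement and invoke the vanishing there, established within the same induction'' --- is circular as stated. To know $\HH\cong A$ for the universal complement $\{\sum x_i=1\}\setminus\bigcup\{x_i=0\}$ you must first know that $\phi$ is well defined on it, i.e.\ that $[x_1]\cdots[x_t]=0$ in its $\HH$ --- which is precisely the class you are trying to kill; and since the universal complement for $t$ factors lives in $\af^t$ with $t$ hyperplanes, the relevant instances are not smaller than the one being proved, so interleaving with your induction on $n$ does not terminate. The paper breaks this circle with an independent argument (its Lemma \ref{lem:basefield}): after symmetrizing to the relations $R(f_0,\ldots,f_t)$ with $\sum f_i=0$ (Lemma \ref{equiv}), one checks via the Gysin computation (Lemma \ref{lem:computation} and Remark \ref{Relation}) that $R(x_0,\ldots,x_t)\circ\beta_j$ equals $\pm R$ of the restricted variables, hence vanishes by induction on $t$ starting from $R(f,-f)=0$; Lemma \ref{lem:basefield} then forces the class $R(x_0,\ldots,x_t)$ to factor through $\Mt(K)$, and evaluation at a rational point (here $\mathrm{char}(K)\neq 2$ is used) identifies it with $R(\lambda_0,\ldots,\lambda_t)=0$ in $\KMW_*(K)$. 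Nothing in your proposal supplies a substitute for this factorization-plus-rational-point mechanism, and without it both relation (3) for $t\geq3$ and relation (4) are unproved, so $\phi$ is not even known to be well defined before your deletion--restriction induction starts. (The symmetrized $R$-relations are also what the paper uses to identify $\ker(\tilde\iota)$ in the middle-exactness step of the algebraic sequence $0\to A'\to A\to A''\to 0$ --- the ``bookkeeping'' you defer is done there via the explicit generators $\lambda\phi_i/\phi_j$ of $\ker(\Gm(U')\to\Gm(Z))$ and Remark \ref{Relation} --- so working with (3),(4) directly rather than with $R$ and anti-commutativity would make that step substantially harder as well.)
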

%The relation (1) is trivial,and we can see similar relations to (2) ,(3) and (4) already in \MW K-theory,(2) is the multiple rule of K-algebra, and notice that relation (3) also means the anti-commutativity $(f)(g)=-\aBra{-1}(g)(f)$ (\cite{tobe}, Lemma 1.2, 1.3).
As indicated above, this theorem and its proof are inspired from the computation of the (ordinary) motivic cohomology of $U$ in \cite{math/0601737}. We can recover the main theorem \cite[Theorem 3.5]{math/0601737} of the motivic cohomology case by taking $ \eta = 0 $.
As a corollary, we obtain the following result.

\begin{corollary}
Let $U\subset \af^N$ be the complement of a finite union of hyperplanes. The isomorphism of Theorem \ref{thm:mainthm} induces an isomorphism
\[
\bigoplus_{n\in\bb{Z}}\mathrm{K}_n^{\rMW}(K)\{\Gm(U)\}/J_U\to \bigoplus_{n\in\bb{Z}}\mathrm{H}_{\rMW}^{n,n}(U,\bb{Z}).
\]
\end{corollary}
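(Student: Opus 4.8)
The plan is to deduce the corollary by restricting the isomorphism of Theorem~\ref{thm:mainthm} to the diagonal bidegrees $(n,n)$, $n\in\bZ$. For a bigraded object $M$ write $M_\Delta:=\bigoplus_{n\in\bZ}M^{n,n}$ for its diagonal part. The first step is to observe that the isomorphism of Theorem~\ref{thm:mainthm} is bigraded: the generators $(f)$ lie in bidegree $(1,1)$, the coefficient ring $\HH(K)$ carries its own bigrading, and each generator (1)--(4) of $J$ is bihomogeneous (of bidegrees $(1,1)$, $(1,1)$, $(t,t)$ and $(2,2)$ respectively, using that $\eta\in\mathrm{K}_{-1}^{\rMW}(K)=\HH^{-1,-1}(K,\bZ)$). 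Hence $J$ is a bigraded ideal, the quotient inherits a bigrading, and the isomorphism restricts to each bidegree; passing to diagonal parts identifies $\bigl(\HH(K)\{\Gm(U)\}/J\bigr)_\Delta$ with $\bigoplus_n\HH^{n,n}(U,\bZ)$.

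The essential input is the vanishing $\HH^{p,q}(K,\bZ)=0$ for $p>q$. Since $K$ is a point for the Nisnevich topology, $\HH^{p,q}(K,\bZ)$ is the $p$-th cohomology of the complex of global sections of the \MW motivic complex $\tbZ(q)$, which is concentrated in cohomological degrees $\le q$; combined with the isomorphism $\HH^{n,n}(K,\bZ)\cong\mathrm{K}_n^{\rMW}(K)$ furnished by $s$ (the theorem of Calmès--Fasel applied to $F=K$), this shows that the diagonal subring $\HH(K)_\Delta$ is precisely $\bigoplus_n\mathrm{K}_n^{\rMW}(K)$. The vanishing also propagates to the free algebra: every nonzero bihomogeneous element of $\HH(K)\{\Gm(U)\}$ of bidegree $(p,q)$ satisfies $p\le q$, because it is a sum of monomials $a_0(f_1)a_1\cdots(f_k)a_k$ in which each generator contributes $(1,1)$ and each coefficient $a_i\in\HH^{p_i,q_i}(K,\bZ)$ forces $p_i\le q_i$.

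With this in hand I would carry out the two identifications that give the corollary. First, a diagonal element of $\HH(K)\{\Gm(U)\}$ is a sum of monomials of bidegree $(n,n)$; since each of its coefficients satisfies $p_i\le q_i$ while $\sum_i p_i=\sum_i q_i$, every coefficient must itself be diagonal. Thus $\bigl(\HH(K)\{\Gm(U)\}\bigr)_\Delta$ is exactly the free graded algebra $\bigoplus_n\mathrm{K}_n^{\rMW}(K)\{\Gm(U)\}$ over the diagonal subring. Second, writing an arbitrary element of $J$ as $\sum_i a_i r_i b_i$ with $r_i$ among the generators (1)--(4) and decomposing $a_i,b_i$ into bihomogeneous pieces, the same inequality forces every bihomogeneous summand lying in a diagonal bidegree to have both $a_i$ and $b_i$ diagonal; hence $J_\Delta$ is precisely the ideal $J_U$ generated by (1)--(4) inside $\bigoplus_n\mathrm{K}_n^{\rMW}(K)\{\Gm(U)\}$. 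Combining these with the first paragraph yields the desired isomorphism
\[
\bigoplus_{n\in\bZ}\mathrm{K}_n^{\rMW}(K)\{\Gm(U)\}/J_U\;\xrightarrow{\ \sim\ }\;\bigl(\HH(K)\{\Gm(U)\}/J\bigr)_\Delta\;\cong\;\bigoplus_{n\in\bZ}\HH^{n,n}(U,\bZ).
\]

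The one genuine ingredient is the vanishing $\HH^{p,q}(K,\bZ)=0$ for $p>q$, equivalently the concentration of $\tbZ(q)$ in cohomological degrees $\le q$; once this is granted, the remainder is the bookkeeping of bigradings performed above, together with the immediate remark that the four generators of $J_U$ are literally the (already diagonal) generators of $J$.
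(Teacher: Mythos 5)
Your proposal is correct and follows essentially the same route as the paper, whose entire proof is the observation that $J_U$ is homogeneous (all generators sit in diagonal bidegrees), so the bigraded isomorphism of Theorem \ref{thm:main} restricts to the diagonal, where the coefficient ring is $\mathrm{H}_{\rMW}^{*,*}(K)\cong\bigoplus_n \mathrm{K}^{\rMW}_n(K)$; your bookkeeping with $A_\Delta$ and $J_\Delta$ is exactly what the paper's ``it follows'' compresses, and your identification of the above-diagonal vanishing $\HH^{p,q}(K,\bZ)=0$ for $p>q$ correctly isolates the input that makes this bookkeeping work. The only small repair: for $q<0$ the vanishing does not come from a motivic complex concentrated in degrees $\le q$ (there is none in negative weights) but from the identification $\HH^{p,q}(X,\bZ)\simeq \mathrm{H}^{p-q}_{\mathrm{Nis}}(X,\mathbf{K}_q^{\mathrm{MW}})$ valid for $q\le 1$, which over the point $X=\mathrm{Spec}(K)$ gives the same conclusion.
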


We do not know if the left-hand side coincides with $\mathrm{K}_*^{\rMW}(U)$. At the end of the paper, we spend a few lines on the real realization homomorphism
\[
\HH(U,\bZ)\to  \mathrm{H}^{*}_{\mathrm{sing}}(U(\mathbb{R}),\bZ)
\]
when $U$ is over $K=\mathbb{R}$. We prove in particular that both sides have essentially the same generators, and that the map is surjective.

\subsection*{Conventions}

The base field $K$ is assumed to be perfect and of characteristic not 2. For a scheme $X$ over $K$, we write $\HH(X)$ for the total MW-motivic cohomology ring  $\bigoplus_{p,q\in\bb{Z}} \HH^{p,q}(X,\bb{Z})$.

For all $f\in \Gm(U)$, we use $(f)$ to indicate the corresponding generator in the corresponding free algebras (e.g. $\mathrm{K}_n^{\rMW}(K)\{\Gm(U)\}$) and $[f]$ to indicate the corresponding elements in the cohomology groups (e.g. $\mathrm{H}_{\rMW}^{1,1}(U,\bb{Z})$).

\section{\MW Motivic Cohomology}\label{sec:MWmotivic}
In this section, we define \MW motivic cohomology and state some properties that will be used in the proof of Theorem \ref{thm:mainthm}. We start with the (big) category of motives $\widetilde{\mathrm{DM}}(K):=\widetilde{\mathrm{DM}}_{\mathrm{Nis}}(K,\bb{Z})$ defined in \cite[Definition 3.3.2]{Deglise16} and the functor 
\[
\Mt : \mathrm{Sm}/K\to \DMt.
\]
The category $\DMt$ is symmetric monoidal \cite[Proposition 3.3.4]{Deglise16} with unit $\one=\Mt(\mathrm{Spec}(K))$. For any integers $p,q\in \bb{Z}$, we obtain MW-motivic cohomology groups 
\[
\HH^{p,q}(X,\bZ):=\rHom_{\DMt}(\Mt(X),\one(q)[p]).
\]
By \cite[Proposition 4.1.2]{Deglise16}, motivic cohomology groups can be computed as the Zariski hypercohomology groups of explicit complexes of sheaves.
%
%We use triangulated category of effective geometrical motives $\DM(K)$ to define motivic cohomology \cite{tobe}. We have the functor $\M : Sm/K \rightarrow \DM(K)$ from smooth scheme of finite type over $K$ to  motives. Then motivic cohomology is defined to be
%\[H^{p,q}(X,\bZ)=H^p(X,\tbb{Z}(q)) := \rHom_{\DM}(\M(X),\tbb{Z}(q))\cong H^p_{Zar}(X,\tbb{Z}(q)) . \] 
%The latter isomorphism comes from presheaf $H^p(-,\tbb{Z}(q))$ is homotopy invariant (\cite{tobe}, lecture 13 and 14).

We will make use of the following property of $\DMt$. First, we note that $\DMt$ is also a triangulated category.
\begin{proposition}[Gysin Triangle]
	\label{Gysin}
	Let $ X $ be a smooth K-scheme, let $ Z\subset X $ be a smooth closed subscheme of codimension $c$ and let  $U=X\setminus Z$. Suppose that the normal cone $ N_X Z $ admits a trivialization $\phi:N_X Z\cong Z\times \mathbb{A}^c$. Then, there is a Gysin triangle
	\[ \Mt(U)\xr{}\Mt(X)\xr{}\Mt(Z)(c)[2c]\xr{+1} \]
	where the last two arrows depend on the choice of $\phi$.
\end{proposition}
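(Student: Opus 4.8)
The plan is to assemble the triangle from three standard inputs, all available once one knows that $\DMt$ carries the same formal package as Voevodsky's $\mathrm{DM}$: a localization (purity-with-supports) triangle, $\mathbb{A}^1$-invariance together with Nisnevich and blow-up excision, and a Thom isomorphism for trivialized vector bundles. The non-oriented nature of the \MW theory concentrates all the genuine difficulty in the last point, which is exactly where the hypothesis on $\phi$ is used.

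First I would invoke localization. The construction of $\DMt$ in \cite{Deglise16} yields, for the open--closed decomposition $U\hookrightarrow X\hookleftarrow Z$, a distinguished triangle
\[ \Mt(U)\xr{}\Mt(X)\xr{}\Mt(X/U)\xr{+1}, \]
where $\Mt(X/U)$ is the cofiber of the first map. It then remains to identify this cofiber first with the Thom space of the normal bundle and then, using $\phi$, with $\Mt(Z)(c)[2c]$. Next I would run deformation to the normal cone: let $D_Z X$ be the complement of the strict transform of $X\times\{0\}$ in the blow-up of $X\times\mathbb{A}^1$ along $Z\times\{0\}$, which is smooth, maps to $\mathbb{A}^1$ with general fiber $X$ and special fiber $N_X Z$, and contains $Z\times\mathbb{A}^1$ as a closed subscheme. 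Forming the cofiber construction along this family and contracting the parameter $\mathbb{A}^1$ by homotopy invariance gives
\[ \Mt(X/U)\simeq \Mt\bigl((D_Z X)/(D_Z X\setminus(Z\times\mathbb{A}^1))\bigr)\simeq \Mt\bigl(N_X Z/(N_X Z\setminus Z)\bigr)=\mathrm{Th}(N_X Z). \]
The only nonformal inputs here are Nisnevich excision (to reduce étale-locally to a product situation along the deformation) and the homotopy-purity argument; both transport from the oriented setting provided one checks they are compatible with the \MW-transfer structure.

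Finally I would use the trivialization. With $\phi:N_X Z\simeq Z\times\mathbb{A}^c$ the Thom space splits as a smash product,
\[ \mathrm{Th}(N_X Z)\simeq \Mt(Z)\otimes\mathrm{Th}(\mathbb{A}^c)\simeq \Mt(Z)\otimes\one(c)[2c]=\Mt(Z)(c)[2c], \]
where the identification $\mathrm{Th}(\mathbb{A}^c)\simeq\one(c)[2c]$ reduces by monoidality to $c=1$, i.e.\ to the computation of the reduced motive of $\mathbb{A}^1/\Gm$ (equivalently $\mathbb{P}^1$) as the Tate object $\one(1)[2]$ in $\DMt$. Splicing this into the localization triangle produces the asserted Gysin triangle, and its second and third arrows manifestly depend on $\phi$: a change of trivialization differs by a $\mathrm{GL}_c$-valued function and acts on $\mathrm{Th}(\mathbb{A}^c)$ through multiplication by $\langle u\rangle\in\KMW_0(K)=\mathrm{GW}(K)$, which is generally nontrivial.

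The main obstacle is precisely this non-orientability. In $\mathrm{DM}$ the Thom space of an arbitrary rank-$c$ bundle is canonically $\Mt(Z)(c)[2c]$ via orientation (Chern) classes, but in $\DMt$ the Thom isomorphism for a general bundle $V$ only yields an object twisted by $\det V$; this is why the statement demands a trivialization rather than mere smoothness of $Z$, and why the resulting maps cannot be made canonical. The residual work is bookkeeping: verifying that deformation to the normal cone and Nisnevich/blow-up excision are genuinely available in $\DMt$ compatibly with \MW-transfers, which is where the substance of the verification lies.
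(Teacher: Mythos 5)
Your argument is correct in outline but takes a genuinely different route from the paper. You rebuild the Gysin triangle internally in $\DMt$: a localization cofiber sequence, deformation to the normal cone $D_ZX$ with Nisnevich excision, and the Thom identification $\mathrm{Th}(N_XZ)\simeq \Mt(Z)(c)[2c]$ via the trivialization $\phi$. The paper instead disposes of all of this in one stroke: the adjunction $\mathrm{SH}(K)\leftrightarrows \DMt$ (obtained from \cite[\S 4.1]{Deglise17} together with the Dold--Kan correspondence) gives an \emph{exact} functor $\mathrm{SH}(K)\to\DMt$, and homotopy purity --- the cofiber sequence identifying $X/(X\setminus Z)$ with $\mathrm{Th}(N_XZ)$ --- is already a theorem in the motivic homotopy category (\cite[Chapter 3, Theorem 2.23]{Morel99}); applying the exact functor and using $\phi$ to identify $\mathrm{Th}(Z\times\mathbb{A}^c)$ with $\Mt(Z)(c)[2c]$ yields the triangle immediately. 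This difference matters precisely at the step you defer as ``bookkeeping'': re-verifying deformation to the normal cone and excision ``compatibly with MW-transfers'' would in fact be the laborious core of your route if carried out internally, whereas it is unnecessary on the paper's route --- the relevant equivalences are established once and for all at the level of motivic spaces, and the exact functor transports them for free, since $\Mt$ factors through $\mathrm{SH}(K)$. So if you want to close your one remaining gap cheaply, replace the internal verification by this transport argument. Your discussion of non-orientability is accurate and correctly explains why the statement demands a trivialization, with one small imprecision: a change of trivialization is a map $Z\to\mathrm{GL}_c$ and acts on the Thom object through $\langle \det\rangle$ of a \emph{unit on} $Z$, not in general through a constant $\langle u\rangle\in \mathrm{GW}(K)$.
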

\begin{proof}
We have an adjunction of triangulated categories
\[
\mathrm{SH}(K)\leftrightarrows \DMt
\]
obtained by combining the adjunction of \cite[\S 4.1]{Deglise17} and the classical Dold-Kan correspondence (e.g. \cite[5.3.35]{Cisinski09b}). Here, $\mathrm{SH}(K)$ is the stable homotopy category of smooth schemes over $K$. The functor $\mathrm{SH}(K)\to \DMt$ being exact, the statement follows for instance from \cite[Chapter 3, Theorem 2.23]{Morel99}.
\end{proof}
%\begin{proposition}[Cancellation Theorem]
%	\label{cancel}
%	\[\rHom_{\DM}(M,N)\xr{}\rHom_{\DM}(M(1),N(1))\] is an isomorphism. 
%\end{proposition}

Futhermore, the Milnor-Witt motivic cohomology groups satisfy most of the formal properties of ordinary motivic cohomology and were computed in a couple of situations:
\begin{enumerate}
\item If $q\leq 1$, there are canonical isomorphisms 
\[
\HH^{p,q}(X,\bZ)\cong \mathrm{H}^{p-q}_{\mathrm{Nis}}(X,\mathbf{K}_q^{\mathrm{MW}})\cong \mathrm{H}^{p-q}_{\mathrm{Zar}}(X,\mathbf{K}_q^{\mathrm{MW}})
\]
where $\mathbf{K}_q^{\mathrm{MW}}$ is the unramified Milnor-Witt $K$-theory sheaf (in weight $q$) introduced in \cite{Morel08}.
\item If $L/K$ is a finitely generated field extension there are isomorphisms $\HH^{n,n}(L,\bZ)\cong \mathrm{K}^{\mathrm{MW}}_n(L)$ fitting in a commutative diagram for any $n\in\bZ$
\[
\xymatrix{\HH^{n,n}(L,\bZ)\ar[r]^-{\sim}\ar[d] & \mathrm{K}^{\mathrm{MW}}_n(L)\ar[d] \\
 \mathrm{H}_{\mathrm{M}}^{n,n}(L,\bZ)\ar[r]^-{\sim} & \mathrm{K}^{\mathrm{M}}_n(L)}
\]
where $\mathrm{K}^{\mathrm{M}}_n(L)$ is the ($n$-th) Milnor $K$-theory group of $L$, the bottom horizontal map is the isomorphism of Suslin-Nesterenko-Totaro, and the right-hand vertical map is the natural homomorphism from Milnor-Witt $K$-theory to Milnor $K$-theory. This result has the following consequence: The Milnor-Witt motivic cohomology groups are computed via an explicit complex of Nisnevich sheaves $\tilde\bZ(q)$ for any integer $q\in \bZ$. The above result shows that there is a morphism of complexes of sheaves
\[
\tilde\bZ(q)\to  \mathrm{K}^{\mathrm{MW}}_q[-q]
\]
where the right-hand side is the complex whose only non-trivial sheaf is $ \mathrm{K}^{\mathrm{MW}}_q$ in degree $-q$. For any essentially smooth scheme $X$ over $K$, this yields group homomorphisms
\[
\HH^{p,q}(X,\bZ)\to \mathrm{H}^{p-q}(X,\mathbf{K}^{\mathrm{MW}}_q)
\]
which are compatible with the ring structure on both sides. In the particular case $p=2n, q=n$ for some $n\in\bZ$, we obtain isomorphisms (functorial in $X$)
\[
\HH^{2n,n}(X,\bZ)\xrightarrow{\sim} \widetilde{\mathrm{CH}}^n(X)
\]
where the right-hand term is the $n$-th Chow-Witt group of $X$ (defined in \cite{Barge00} and \cite{Fasel08a}). Again, these isomorphisms fit into commutative diagrams
\[
\xymatrix{\HH^{2n,n}(X,\bZ)\ar[r]^-{\sim}\ar[d] & \widetilde{\mathrm{CH}}^n(X)\ar[d] \\
 \mathrm{H}_{\mathrm{M}}^{2n,n}(X,\bZ)\ar[r]^-{\sim} & \mathrm{CH}^n(X)}
\]
where the right-hand vertical homomorphism is the natural map from Chow-Witt groups to Chow groups.
\item The total Milnor-Witt motivic cohomology has Borel classes for symplectic bundles \cite{Yang17} but in general the projective bundle theorem fails \cite{Yang20}.
\item If $X$ is a smooth scheme over $\mathbb{R}$, there are two interesting realization maps. On the one hand, one may consider the composite
\[
\HH^{p,q}(X,\bZ)\to \mathrm{H}_{\mathrm{M}}^{p,q}(X,\bZ)\to \mathrm{H}^p_{\mathrm{sing}}(X(\mathbb{C}),\bZ)
\]
where the right-hand map is the complex realization map. On the other hand, one may also consider the following composite
\[
\HH^{p,q}(X,\bZ)\to \mathrm{H}^{p-q}(X,\mathbf{K}^{\mathrm{MW}}_q)\to \mathrm{H}^{p-q}(X,\mathbf{I}^q)\to \mathrm{H}^{p-q}_{\mathrm{sing}}(X(\mathbb{R}),\bZ)
\]
where $\mathbf{I}^q$ is the unramified sheaf associated to the $q$-th power of the fundamental ideal in the Witt ring, $\mathbf{K}^{\mathrm{MW}}_q\to \mathbf{I}^q$ is the canonical projection and $\mathrm{H}^{p-q}(X,\mathbf{I}^q)\to \mathrm{H}^{p-q}_{\mathrm{sing}}(X(\mathbb{R}),\bZ)$ is Jacobson's signature map \cite{Jacobson17}.

We note here that these two realization maps show that Milnor-Witt motivic cohomology is in some sense the analogue of both the singular cohomology of the complex and the real points of $X$.
\end{enumerate}

\section{Basic structure of the cohomology ring}
Let $V$ be an affine space, i.e. $V\cong \af^N$ for some $N\in\bb{N}$. We consider finite families $I$ of hyperplanes in $V$ (that we suppose are distinct). We denote by $\vert I\vert$ the cardinality of $I$ and set $U^V_I:=V\setminus (\cup_{Y\in I} Y)$ and simply write $U^N_I$ when $V=\af^N$. For any hyperplane $Y$, we put $I_{Y}:=\{Y_i \cap Y | Y_i\in I, Y_i\neq Y\}$. 
%and $(U^V_{I})_{\vert_Y}:=U^Y_{\vert_{I_Y}}$.

\begin{proposition}\label{decomp}
Let $V$ and $I$ be as above. We have
\[
\Mt(U^V_I)\cong \oplus_{j\in J}\one(n_j)[n_j]
\]
for some set $J$ and integers $n_j \geq 0$.
\end{proposition}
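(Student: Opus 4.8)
The plan is to argue by induction on the number $\vert I\vert$ of hyperplanes, uniformly over all affine spaces $V\simeq\af^N$. When $\vert I\vert=0$ we have $U^V_I=V\simeq\af^N$, and $\af^1$-invariance gives $\Mt(U^V_I)\cong\one=\one(0)[0]$, which settles the base case. For the inductive step I would fix a hyperplane $Y\in I$ and set $I'=I\setminus\{Y\}$. The locus $Z:=Y\cap U^V_{I'}$ is a smooth closed subscheme of codimension $1$ in $U^V_{I'}$; its normal bundle is the restriction of the (trivial) normal bundle of the hyperplane $Y$ in $V$, and one checks directly that $U^V_{I'}\setminus Z=U^V_I$. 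Moreover $Z$ is the complement, inside $Y\simeq\af^{N-1}$, of the arrangement $I_Y$ consisting of the (at most $\vert I\vert-1$) nonempty intersections $Y_i\cap Y$ with $Y_i\in I'$. Thus both $U^V_{I'}$ (an arrangement of $\vert I\vert-1$ hyperplanes in $\af^N$) and $Z$ (an arrangement of at most $\vert I\vert-1$ hyperplanes in $\af^{N-1}$) are covered by the inductive hypothesis.

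Since the normal bundle is trivial, I may apply the Gysin triangle of Proposition \ref{Gysin} to the pair $(U^V_{I'},Z)$, obtaining a distinguished triangle
\[
\Mt(U^V_I)\xr{\alpha}\Mt(U^V_{I'})\xr{\beta}\Mt(Z)(1)[2]\xr{+1}.
\]
By induction I may write $\Mt(U^V_{I'})\cong\oplus_{j}\one(a_j)[a_j]$ and $\Mt(Z)\cong\oplus_{k}\one(b_k)[b_k]$ as finite direct sums (the motive of a smooth variety being geometric), with all $a_j,b_k\geq 0$; hence $\Mt(Z)(1)[2]\cong\oplus_k\one(b_k+1)[b_k+2]$.

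The heart of the argument is to show that $\beta=0$. As source and target are finite sums of Tate twists, it is enough to see that each matrix entry lies in a vanishing group:
\[
\rHom_{\DMt}\big(\one(a_j)[a_j],\one(b_k+1)[b_k+2]\big)=\HH^{\,q+1,\,q}(K),\qquad q:=b_k+1-a_j.
\]
Every such group lives in the off-diagonal range $p=q+1>q$. The required input is therefore the vanishing $\HH^{p,q}(K)=0$ for $p>q$: for $q\leq 1$ this is item (1) of the introduction, since $\HH^{p,q}(\mathrm{Spec}\,K)\cong \mathrm{H}^{p-q}_{\mathrm{Nis}}(\mathrm{Spec}\,K,\mathbf{K}^{\mathrm{MW}}_q)$ and a point carries no higher Nisnevich cohomology, while for $q\geq 2$ it should follow from the concentration of the weight-$q$ MW-motivic complex in cohomological degrees $\leq q$. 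Establishing this off-diagonal vanishing in all weights is, I expect, the main obstacle, since the weights $a_j,b_k$ grow with the complexity of the arrangement and one genuinely meets arbitrarily large $q$.

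Granting $\beta=0$, the triangle degenerates. Rotating it to
\[
\Mt(Z)(1)[1]\xr{}\Mt(U^V_I)\xr{\alpha}\Mt(U^V_{I'})\xr{\beta}\Mt(Z)(1)[2],
\]
whose connecting morphism is exactly $\beta$, and using that a distinguished triangle with vanishing connecting map splits, I obtain
\[
\Mt(U^V_I)\cong\Mt(U^V_{I'})\oplus\Mt(Z)(1)[1].
\]
Now $\Mt(Z)(1)[1]\cong\oplus_k\one(b_k+1)[b_k+1]$ is again a finite sum of diagonal Tate objects, with weights $b_k+1\geq 0$, so $\Mt(U^V_I)$ has the asserted form and the induction closes. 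As a sanity check, the case $N=1$, $\vert I\vert=1$ yields $\Mt(\Gm)\cong\one\oplus\one(1)[1]$, confirming the direction of the splitting.
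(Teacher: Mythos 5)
Your proof is correct and takes essentially the same route as the paper: the same induction via the Gysin triangle, the same reduction of the connecting map to matrix entries in the groups $\HH^{q+1,q}(K,\bZ)$ between diagonal Tate twists, and the same splitting of the triangle once that map vanishes. The off-diagonal vanishing $\HH^{p,q}(K,\bZ)=0$ for $p>q$ that you flag as the main obstacle is precisely what the paper invokes from D\'eglise--Fasel (Proposition 4.1.2 together with the proof of Theorem 4.2.4 there, covering all weights, negative ones included), so your expected input is indeed a known result and no genuine gap remains.
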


\begin{proof}
We proceed by induction on the dimension $N$ of $V$ and $\vert I\vert$. If $\vert I\vert=0$, then $\Mt(U^V_I)=\Mt(V)\cong \one$ and we are done. Let then $\vert I\vert\geq 1$ and $Y\in I$. The Gysin triangle reads as 
\begin{equation}
	\label{gysin1}
	\Mt(U^V_I)\to\Mt(U^V_{I-\{Y\}})\xr{\phi}\Mt(U^Y_{I_Y})(1)[2]\xr{+1} .
	\end{equation}
If $\phi = 0 $, then the triangle is split and consequently we obtain an isomorphism
\begin{equation}
	\label{split}
\Mt(U^V_I)\cong \Mt(U^V_{I-\{Y\}}) \oplus \Mt(U^{Y}_{I_Y})(1)[1].
	\end{equation}
Since $\vert I-\{Y\}\vert  < \vert I\vert $ and $\mathrm{dim}(Y)=\mathrm{dim}(V)-1$ we conclude by induction that the right-hand side has the correct form. We are then reduced to show that $\phi = 0 $.

By induction,
\begin{align*}
	\phi&\in \rHom_{\DMt} (\Mt(U^V_{I-\{Y\}}),\Mt(U^{Y}_{I_Y})(1)[2]) \\
 &\cong \bigoplus_{j,k}\rHom_{\DMt}(\one(n_j)[n_j],\one(m_k)[m_k+1])
\end{align*}

for some integers $n_j,m_k\geq 0$, and it suffices to prove that \\ $\rHom_{\DMt}(\one,\one(m)[m+1] )=0$ for any $m\in\bb{Z}$ to conclude. Now,
\[
\rHom_{\DMt}(\one,\one(m)[m+1] )=\HH^{m+1,m}(K,\bb{Z})
\] 
and the latter is trivial by \cite[Proposition 4.1.2]{Deglise16} and \cite[proof of Theorem 4.2.4]{Deglise16}.
\end{proof}

As an immediate corollary, we obtain the following result.

\begin{corollary}\label{cor:free}
The motivic cohomology $\HH(U^V_I)$ is a finitely generated free $\HH(K)$-module.
\end{corollary}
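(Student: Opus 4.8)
The plan is to deduce the corollary directly from the preceding proposition, once one observes that the index set $J$ there is finite. This finiteness is not recorded in the statement but is visible from the inductive construction: at each step the splitting \eqref{split} expresses $\Mt(U^V_I)$ as a direct sum of two motives, each of which is a finite direct sum of twisted and shifted copies of $\one$ by the induction hypothesis (the base case $\vert I\vert=0$ giving the single summand $\one$). So I would begin by noting that $J$ is finite and each $n_j\geq 0$, writing $\Mt(U^V_I)\cong\bigoplus_{j\in J}\one(n_j)[n_j]$.

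Next I would unwind the definition $\HH^{p,q}(U^V_I,\bZ)=\rHom_{\DMt}(\Mt(U^V_I),\one(q)[p])$. Since $J$ is finite, $\rHom_{\DMt}(-,\one(q)[p])$ carries the direct sum to the corresponding direct sum, and since twisting and shifting are autoequivalences of $\DMt$, this yields
\[
\HH^{p,q}(U^V_I,\bZ)\cong\bigoplus_{j\in J}\rHom_{\DMt}(\one(n_j)[n_j],\one(q)[p])\cong\bigoplus_{j\in J}\HH^{p-n_j,\,q-n_j}(K,\bZ).
\]
Summing over all $p,q\in\bZ$ then produces an isomorphism of bigraded abelian groups $\HH(U^V_I)\cong\bigoplus_{j\in J}\HH(K)$, in which the $j$-th summand is a copy of $\HH(K)$ with grading shifted by $(n_j,n_j)$, so that it is generated by a single class in bidegree $(n_j,n_j)$.

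The only point demanding attention is that this isomorphism is not merely additive but $\HH(K)$-linear, which is what exhibits $\HH(U^V_I)$ as a \emph{free} $\HH(K)$-module on the finite set $J$. For this I would recall that the $\HH(K)$-module structure on $\HH(X)$ is, for $\alpha\in\HH^{a,b}(K)=\rHom_{\DMt}(\one,\one(b)[a])$ and $\beta\in\HH^{p,q}(X)$, given by post-composition $\beta\mapsto\alpha(q)[p]\circ\beta$; this follows from the definition of the module action via pullback along $X\to\mathrm{Spec}(K)$ and the cup product, using that the left unitor identifies $\one\otimes\Mt(X)$ with $\Mt(X)$. Under the decomposition this operation acts componentwise on the factors $\beta_j\in\HH^{p-n_j,q-n_j}(K,\bZ)$, and on each factor it agrees with multiplication in the ring $\HH(K)$ (again post-composition of maps out of $\one$). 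Hence every summand is isomorphic to $\HH(K)$ as an $\HH(K)$-module and the total isomorphism is $\HH(K)$-linear; since $J$ is finite, $\HH(U^V_I)$ is a finitely generated free $\HH(K)$-module. I expect tracing this module structure through the representability isomorphism and the twist-shift autoequivalences to be the only genuinely substantive step; the remainder is formal.
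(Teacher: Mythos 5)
Your proposal is correct and follows exactly the route the paper intends: the paper presents the corollary as immediate from the decomposition $\Mt(U^V_I)\cong\bigoplus_{j\in J}\one(n_j)[n_j]$, and your write-up simply makes explicit the points left tacit there (finiteness of $J$ via the two-term splitting \eqref{split} in the induction, representability of $\HH^{p,q}$, the twist-shift autoequivalence, and $\HH(K)$-linearity of the resulting isomorphism). No discrepancy with the paper's argument.
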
 
%\begin{lemma}
%	the $\phi$ in (\ref{gysin1}) is $0$
%\end{lemma}
%\begin{proof}
%	By induction hypothesis
%	\[\phi\in \rHom_{\DM} (\M(U^N_{I-\{Y\}}),\M(U^{N-1}_{I|_Y})) \cong 
%	\oplus_{j,k}\rHom_{\DM}(\tbZ(n_j)[n_j],\tbZ(m_k)[m_k+1]).
%	\]
%	Using Cancellation Theorem \ref{cancel} we can reduce to prove
%	\jf{\[ 
%	\rHom_{\DM}(\tbZ(n)[n],\tbZ[1] )=0 , n\leq 0 \]
%	\[
%	\rHom_{\DM}(\tbZ,\tbZ(m)[m+1] )=0 , m \leq 0
%	\]}
%	to be finish
%	\jf{I think that it would be clearer to prove that $\rHom_{\DM}(\tbZ,\tbZ(m)[m+1] )=0$ for any $m\in \mathbb{Z}$.}
%\end{proof}

To obtain more precise results, we now study the Gysin (split) triangle \eqref{gysin1} in more detail. We can rewrite it as
\[
\Mt(U^Y_{I_Y})(1)[1]\xr{\beta^Y}\Mt(U^V_I)\xr{\alpha^Y}\Mt(U^V_{I-\{Y\}})\xr{0}
\]
and therefore we obtain the following  short (split) exact sequence in which the morphisms are induced by the first two morphisms in the triangle
\begin{equation}\label{short}
0\xr{}\bigoplus_{p,q}\HH^{p,q}(U^V_{I-\{Y\}},\bb{Z})
\xr{\alpha^Y_*}\bigoplus_{p,q}\HH^{p,q}(U^V_{I},\bZ)
\xr{\beta^Y_*}\bigoplus_{p,q}\HH^{p-1,q-1}(U^Y_{I_Y},\bZ)
\xr{}0.
\end{equation}

The inclusion $Y\subset V$ yields a morphism $U^Y_{I_Y} \to U^V_{I-\{Y\}}$ and therefore a morphism $\iota:\Mt(U^Y_{I_Y})\to \Mt(U^V_{I-\{Y\}})$. The global section $f$ of $V$ corresponding to the equation of $Y$ becomes invertible in $U^V_I$ and therefore yields a morphism $[f]:\Mt(U^V_I)\to \one(1)[1]$ corresponding to the class $[f]\in \HH^{1,1}(U^V_I,\bZ)$ given by the morphism 
\[
s:\bigoplus_{n\in\bb{Z}}\mathrm{K}_n^{\rMW}(-)\to \bigoplus_{n\in\bb{Z}}\mathrm{H}_{\rMW}^{n,n}(-,\bb{Z}).
\]

\begin{lemma}\label{lem:computation}
The following diagram commutes
\[
\xymatrix{\Mt(U^Y_{I_Y})(1)[1]\ar[d]_-{\beta^Y}\ar[r]^-{\iota(1)[1]} & \Mt(U^V_{I-\{Y\}})(1)[1] \\
\Mt(U_I^V)\ar[r]_-{\Delta} & \Mt(U_I^V)\otimes \Mt(U_I^V)\ar[u]_-{\alpha^Y\otimes[f]}.}
\]
\end{lemma}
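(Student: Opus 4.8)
Write $Z:=U^Y_{I_Y}$, $X:=U^V_{I-\{Y\}}$ and $U:=U^V_I$, so that $\beta^Y\colon\Mt(Z)(1)[1]\to\Mt(U)$ is the boundary of the split Gysin triangle, $\alpha^Y\colon\Mt(U)\to\Mt(X)$ is induced by the open immersion, and $\iota\colon\Mt(Z)\to\Mt(X)$ is induced by $Z\hookrightarrow X$. The plan is to read the assertion as a \emph{self-intersection formula}. Writing $\mu_{[t]}:=(\mathrm{id}\otimes[t])\circ\Delta\colon\Mt(U)\to\Mt(U)(1)[1]$ for cup product with $[t]$, the factorisation $\alpha^Y\otimes[t]=(\alpha^Y\otimes\mathrm{id})\circ(\mathrm{id}\otimes[t])$ turns the lower route into $\alpha^Y(1)[1]\circ\mu_{[t]}\circ\beta^Y$, so the lemma becomes the identity
\[
\alpha^Y(1)[1]\circ\mu_{[t]}\circ\beta^Y=\iota(1)[1].
\]
Since $\alpha^Y\circ\beta^Y=0$ (consecutive arrows of the triangle), the whole point is that inserting $\cup[t]$ fills in the boundary and converts the restriction $\alpha^Y$ into the Gysin class of $\iota$.

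First I would exploit that $t$ is the global linear equation of $Y$ on $V\simeq\af^N$: it restricts to a smooth morphism $t\colon X\to\af^1$ with $Z=t^{-1}(0)$ and $U=t^{-1}(\Gm)$, whose differential trivialises the conormal bundle compatibly with the chosen $\phi$. On $U$ the class $[t]$ is then the pullback $(t|_U)^*[\tau]$ of the tautological class $[\tau]\in\HH^{1,1}(\Gm)$ along $t|_U\colon U\to\Gm$. Unwinding the deformation to the normal cone that underlies Proposition \ref{Gysin}, the boundary $\beta^Y$ is identified, via the purity isomorphism, with $\mathrm{id}_{\Mt(Z)}\otimes\beta_0$, where $\beta_0\colon\one(1)[1]\to\Mt(\Gm)$ is the boundary of the model triangle
\[
\Mt(\Gm)\xr{}\Mt(\af^1)=\one\xr{}\one(1)[2]\xr{+1}
\]
for $\{0\}\subset\af^1$. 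In this way the computation of $\mu_{[t]}\circ\beta^Y$ is reduced to the elementary model $\Gm\hookrightarrow\af^1$, the $Z$-direction being carried along by the external product.

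In the model, $\Mt(\Gm)\cong\one\oplus\one(1)[1]$ (split by the unit section), $\beta_0$ is the inclusion of the summand $\one(1)[1]$, and $[\tau]$ restricts to the identity of $\one(1)[1]$ there; hence $\mu_{[\tau]}$ sends the boundary generator to the Thom (fundamental) class, which extends across $\{0\}$, so that pushing forward by $\alpha^Y(1)[1]$ recovers the class of the zero section. Reinstating the $Z$-factor yields $\iota(1)[1]$, as required. I expect the main obstacle to be twofold. First, making precise the multiplicativity of the normal-cone deformation, i.e. checking that $\Delta\circ\beta^Y$ is genuinely governed by the external product and the inclusion $\iota$; the diagonal is \emph{not} naively natural with respect to the boundary map, and controlling this discrepancy is the formal heart of the argument. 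Second, in the Milnor-Witt setting one must pin down the unit $\aBra{u}$ occurring in the residue of $[t]$, so that the Thom class is normalised to give $\iota$ on the nose rather than a $\aBra{u}$-twisted variant. This quadratic bookkeeping, reflected in relation (4) of Theorem \ref{thm:mainthm}, is exactly what is absent from the ordinary-motivic argument of \cite{math/0601737}.
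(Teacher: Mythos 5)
Your reformulation of the lemma as the identity $\alpha^Y(1)[1]\circ\mu_{[t]}\circ\beta^Y=\iota(1)[1]$ is correct, and the model computation for $\{0\}\subset\af^1$ is indeed the right picture; but the reduction to that model is a genuine gap, and you flag it yourself without closing it. The asserted identification of $\beta^Y$ with $\mathrm{id}_{\Mt(Z)}\otimes\beta_0$ ``via the purity isomorphism'' does not even typecheck: $\mathrm{id}_{\Mt(Z)}\otimes\beta_0$ has target $\Mt(Z)\otimes\Mt(\Gm)=\Mt(Z\times\Gm)$, whereas $\beta^Y$ has target $\Mt(U)$, and there is in general no morphism $Z\times\Gm\to U$ playing the role of a tubular neighbourhood. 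Concretely, take $V=\af^2$, $t=y$, $Y=\{y=0\}$, $Y_1=\{x+y=1\}$: in the splitting $V\simeq Y\times\af^1$ the point $(0,1)$ lies in $Z\times\Gm$ but on $Y_1$, hence outside $U$. So the identification could only be mediated by the deformation to the normal cone, and the statement needed there is precisely the compatibility of the diagonal (equivalently of $\mu_{[t]}$) with the localization boundary map --- the point you yourself call ``the formal heart of the argument'' and then defer. A proposal whose central step is named as an obstacle and left unresolved is an outline of the problem, not a proof of the lemma.

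The paper closes this gap with a single naturality argument that dissolves both of your obstacles at once. In your notation ($Z=U^Y_{I_Y}$, $X=U^V_{I-\{Y\}}$, $U=U^V_I$), the square of schemes
\[
\xymatrix{U\ar[r]\ar[d]_-{(\mathrm{id},\,t)} & X\ar[d]^-{\mathrm{id}\times t}\\
X\times\Gm\ar[r] & X\times\af^1}
\]
is cartesian ($U=t^{-1}(\Gm)$, $Z=t^{-1}(0)$) and hence induces a morphism from the Gysin triangle of $(X,Z)$ to that of $(X\times\af^1,X\times\{0\})$, with $\iota(1)[2]$ on the third terms; since the trivializations of both normal bundles are the ones induced by $dt$, the triangles match on the nose and no unit $\aBra{u}$ can appear --- your second worry evaporates for free. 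Rotating the triangles and composing the resulting commutative boundary square with the canonical retraction $\Mt(X\times\Gm)\to\Mt(X)(1)[1]$ (the projection splitting the model boundary $\beta_0$) yields directly $\iota(1)[1]=(\alpha^Y\otimes[t])\circ\Delta\circ\beta^Y$. The key trick is that the diagonal and the class $[t]$ are packaged into the graph morphism $(\mathrm{id},t)$ at the level of schemes, so the non-naturality of $\Delta$ against boundary maps --- your declared formal heart --- never has to be confronted. Repairing your route would amount to proving the commutativity of exactly this morphism of triangles, i.e.\ to reproducing the paper's two-line naturality check in the heavier clothing of external products and Thom classes in the deformation space.
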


\begin{proof}
The commutative diagram of schemes
\[
\xymatrix{U_I^V\ar[r]\ar[d] & U^V_{I-\{Y\}}\ar[d]^-{(\mathrm{Id},f)}  \\
U^V_{I-\{Y\}}\times \Gm\ar[r] & U^V_{I-\{Y\}}\times \af^1     }
\]
yields a morphism of Gysin triangles and thus a commutative diagram
\[
\xymatrix{\Mt(U^Y_{I_Y})(1)[1]\ar[r]^-{\beta^Y}\ar[d]_-{\iota(1)[1]} & \Mt(U_I^V)\ar[r]^-{\alpha^Y}\ar[d] & \Mt(U^V_{I-\{Y\}})\ar[r]\ar[d] & \ldots\\
 \Mt(U^V_{I-\{Y\}})(1)[1]\ar[r]\ar@{=}[rd] &  \Mt(U^V_{I-\{Y\}}\times \Gm)\ar[r]\ar[d] &  \Mt(U^V_{I-\{Y\}}\times \af^1)\ar[r] & \ldots \\
 &  \Mt(U^V_{I-\{Y\}})(1)[1] & & }
\]
in which the map $\Mt(U^V_{I-\{Y\}}\times \Gm)\to  \Mt(U^V_{I-\{Y\}})(1)[1]$ is just the projection. We conclude by observing that the middle vertical composite is just $(\alpha^Y\otimes[f])\circ \Delta$.
\end{proof}

We may now prove the main result of this section. 

\begin{proposition}\label{prop:generation}
The cohomology ring $\HH(U)$ is generated by the classes of units in $U$ as an $\HH(K)$ algebra. In particular, the homomorphism 
\[
s:\bigoplus_{n\in\bb{Z}}\mathrm{K}_n^{\rMW}(U)\to \bigoplus_{n\in\bb{Z}}\mathrm{H}_{\rMW}^{n,n}(U,\bb{Z})
\]
is surjective.
\end{proposition}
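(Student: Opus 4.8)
The plan is to run the same double induction on the dimension $N=\dim V$ and on the cardinality $|I|$ that underlies the structure result above, using the split short exact sequence \eqref{short} attached to a chosen hyperplane $Y\in I$. Write $A\subseteq\HH(U^V_I)$ for the $\HH(K)$-subalgebra generated by the classes $[f]$ of the units $f\in\Gm(U^V_I)$; the goal is to prove $A=\HH(U^V_I)$. The base cases are immediate: if $|I|=0$ then $U^V_I=V$ is $\mathbb{A}^1$-contractible, so $\HH(U^V_I)=\HH(K)$, and $N=0$ is trivial. Granting the inductive hypothesis, I would establish the equality $A=\HH(U^V_I)$ by matching the two outer terms of \eqref{short} to $A$.

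First I would show $\mathrm{im}\,\alpha^Y_*\subseteq A$. The morphism $\alpha^Y$ is induced by the open immersion $U^V_I\hookrightarrow U^V_{I-\{Y\}}$, so on cohomology $\alpha^Y_*$ is the associated pullback, hence a homomorphism of $\HH(K)$-algebras. By the induction hypothesis on $|I|$ the source $\HH(U^V_{I-\{Y\}})$ is generated over $\HH(K)$ by unit classes, and the pullback of a unit of $U^V_{I-\{Y\}}$ is a unit of $U^V_I$; therefore $\mathrm{im}\,\alpha^Y_*\subseteq A$.

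The crux is to show $\beta^Y_*(A)=\HH(U^Y_{I_Y})$. Here the input is Lemma \ref{lem:computation}, which I would translate into the cohomological identity
\[
\beta^Y_*\bigl(\alpha^Y_*(b)\cup[t]\bigr)=\iota^*(b)\qquad\text{for all } b\in\HH(U^V_{I-\{Y\}}),
\]
obtained by composing the class $\alpha^Y_*(b)\cup[t]$ with $\beta^Y$, rewriting the cup product through the diagonal $\Delta$ and the map $\alpha^Y\otimes[t]$, and invoking the lemma's equality $(\alpha^Y\otimes[t])\circ\Delta\circ\beta^Y=\iota(1)[1]$; the twist $(1)[1]$ is precisely the degree shift appearing in \eqref{short}. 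Since $\alpha^Y_*(b)\in A$ by the previous step and $[t]\in A$ (as $t$ is a unit on $U^V_I$), the left-hand side lies in $\beta^Y_*(A)$, so $\iota^*\bigl(\HH(U^V_{I-\{Y\}})\bigr)\subseteq\beta^Y_*(A)$. It then remains to see that $\iota^*$ is surjective: the units of a hyperplane-arrangement complement are, up to $K^\times$, monomials in the defining linear forms, so the restriction map $\Gm(U^V_{I-\{Y\}})\to\Gm(U^Y_{I_Y})$ is surjective; as both sides are generated over $\HH(K)$ by unit classes (induction on dimension for the target), the $\HH(K)$-algebra map $\iota^*$ is surjective, whence $\beta^Y_*(A)=\HH(U^Y_{I_Y})$.

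I would then conclude by a diagram chase in \eqref{short}: given $x\in\HH(U^V_I)$, choose $a\in A$ with $\beta^Y_*(a)=\beta^Y_*(x)$; then $x-a\in\ker\beta^Y_*=\mathrm{im}\,\alpha^Y_*\subseteq A$, so $x\in A$ and $A=\HH(U^V_I)$. The ``in particular'' assertion follows from the bigrading: any element of $\HH^{n,n}(U)$ is an $\HH(K)$-combination of monomials $[f_1]\cdots[f_k]\in\HH^{k,k}(U)$, and for the total bidegree to equal $(n,n)$ the coefficient must lie in $\HH^{n-k,n-k}(K)\cong\mathrm{K}^{\rMW}_{n-k}(K)$; each such term is the image under $s$ of a product in $\mathrm{K}^{\rMW}_n(U)$, so $s$ is surjective onto $\bigoplus_n\HH^{n,n}(U,\bb{Z})$. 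The main obstacle is the faithful translation of Lemma \ref{lem:computation} into the displayed identity with the correct twists, together with the surjectivity of $\iota^*$ (which rests on the description of units of a hyperplane complement); the rest is bookkeeping in the double induction.
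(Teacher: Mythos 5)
Your proposal is correct and follows essentially the same route as the paper's proof: the same double induction on $\dim V$ and $\vert I\vert$ using the split sequence \eqref{short}, the same cohomological translation of Lemma \ref{lem:computation} into $\beta^Y_*\bigl(\alpha^Y_*(b)\cup[t]\bigr)=\iota^*(b)$, the same appeal to surjectivity of $\Gm(U^V_{I-\{Y\}})\to\Gm(U^Y_{I_Y})$, and the same concluding diagram chase. Your bigrading argument for the ``in particular'' clause merely makes explicit what the paper leaves implicit.
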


\begin{proof}
We still prove the result by induction on $\vert I\vert$ and the dimension of $V$, the case $\vert I\vert =0$ being obvious. Suppose then that the result holds for $U^Y_{I_Y}$ and  $U^V_{I-\{Y\}}$ and consider the split sequence \eqref{short}. For any $x\in \HH(U)=\HH(U^V_I)$, we have that $\beta^Y_*(x)\in \HH(U^Y_{I_Y})$ is in the subalgebra generated by $\{[f]\vert f\in \Gm (U^Y_{I_Y})\}$ and $\eta$. For any $f_1,\ldots,f_n\in \Gm(U^V_{I-\{Y\}})$, Lemma \ref{lem:computation} yields
\[
\beta^Y_*([(f_1)_{\vert U^V_I}]\cdots[(f_n)_{\vert U^V_I}]\cdot [t])=[(f_1)_{\vert U^Y_{I_Y}}]\cdots[(f_n)_{\vert U^Y_{I_Y}}].
\]
The map $\Gm(U^V_{I-\{Y\}})\to \Gm(U^Y_{I_Y})$ being surjective, it follows that there exists $x^\prime\in \HH(U^V_I)$ in the subalgebra generated by units such that $\beta^Y_*(x-x^\prime)=0$. Thus $x-x^\prime=\alpha_*(y)$ for some $y\in \HH(U^V_{I-\{Y\}})$ and the result now follows from the fact that $\alpha_*$ is just induced by the inclusion $U^V_I\subset U^V_{I-\{Y\}}$.
\end{proof}

\section{Relations in the cohomology ring}

The purpose of this section is to prove that the relations of Theorem \ref{thm:mainthm} hold in $\HH(U)$. The first two relations are obviously satisfied since the homomorphism is induced by the ring homomorphism
\[
s:\bigoplus_{n\in\bb{Z}}\mathrm{K}_n^{\rMW}(U)\to \bigoplus_{n\in\bb{Z}}\mathrm{H}_{\rMW}^{n,n}(U,\bb{Z}).
\]
Recall now that the last two relations are

\begin{itemize}
	\item[3.] $[f_1][f_2]\cdots[f_t]$, if $f_i \in \Gm(U)$ for any $i$ and $\sum_{i=1}^t f_i =1 $,
	\item[4.] $[f]^2-[-1][f]$, if $ f \in\Gm(U)$. 
\end{itemize}
We will prove that they are equal to $ 0 $ in $\HH(U)$. Actually, it will be more convenient to work with the following relations
\begin{itemize}
\item[$3^\prime$.] $R(f_0,\ldots,f_t)$ defined by 
\[\sum_{i=0}^t \epsilon^{t+i}\sBra{f_0}\ldots \widehat{\sBra{f_i}} \ldots\sBra{f_t} +
	\sum_{0\leq i_0<\ldots<i_k\leq t} (-1)^k\sBra{-1}^k\sBra{f_0}\ldots \widehat{\sBra{f_{i_0}}}\ldots \widehat{\sBra{f_{i_k}}} \ldots\sBra{f_t} \]
for $f_i \in \Gm(U)$ such that $\sum_{i=0}^t f_i =0 $,

\item[$4^\prime$.] (anti-commutativity) $[f][g]-\epsilon[g][f]$,
\end{itemize}
where $\epsilon:=-\langle -1\rangle=-1-\eta[-1]$. 

\begin{lemma}\label{equiv}
The two groups of relations are equivalent in $\HH(U)$.  
\end{lemma}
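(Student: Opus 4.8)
The plan is to show that the two sets of relations $\{3,4\}$ and $\{3',4'\}$ generate the same ideal in $\HH(U)$, by proving each relation of one family lies in the ideal generated by the other (together with relations $1,2$, which are already known to hold). The most efficient route is to establish the equivalence in two directions, treating the degree-two relations first since they are the key to unlocking the symmetrization needed for the others.

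**First I would** prove that relation $4$, namely $[f]^2=[-1][f]$, is equivalent to relation $4'$, the $\epsilon$-graded-commutativity $[f][g]=\epsilon[g][f]$. The standard Milnor-Witt computation gives this: starting from $[fg]=[f]+[g]+\eta[f][g]$ (relation $2$) and its counterpart with $f,g$ swapped, subtract to obtain $\eta\bigl([f][g]-[g][f]\bigr)=0$. To upgrade this to the full anticommutativity one uses the relation $[f][-f]=0$ in $\KMW$ (which follows from $[a][1-a]=0$ by a short manipulation), expand $[fg]$ in two ways, and deduce $[f][g]+[g][f]=-\eta[-1][f][g]$, equivalently $[f][g]=\epsilon[g][f]$ once one knows $[f]^2=[-1][f]$. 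Conversely, setting $g=f$ in $4'$ yields $[f]^2=\epsilon[f]^2$, i.e. $(1-\epsilon)[f]^2=0$; combined with the definition $\epsilon=-1-\eta[-1]$ this recovers $4$ up to the same $\KMW$ identities. I would carry out these manipulations purely inside $\KMW_*$ applied to $\cO(U)$, so they are valid in the image of $s$ and hence in $\HH(U)$.

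**Next I would** deduce the equivalence of relation $3$ (the Steinberg-type relation $[f_1]\cdots[f_t]=0$ when $\sum f_i=1$) with relation $3'$ (the cycle relation $R(f_0,\ldots,f_t)$ when $\sum f_i=0$). The link is a change of variables: given $f_0,\ldots,f_t$ with $\sum f_i=0$, one cannot directly divide, but the expression $R(f_0,\ldots,f_t)$ is designed so that, after using the graded-commutativity $4'$ to reorder factors and repeatedly applying the two-term Steinberg relation $[a][1-a]=0$ (relation $3$ with $t=2$), the alternating sums telescope. Concretely, I expect to expand each term $[f_0]\cdots\widehat{[f_i]}\cdots[f_t]$ and show that pairing it against the correction terms $\sum_{i_0<\cdots<i_k}(-1)^k[-1]^k(\cdots)$ produces exactly the Milnor-Witt Steinberg relations, so that $R=0$ is forced by $3$, and conversely that specializing $R$ to the case $\sum f_i=1$ (by a suitable affine substitution sending one variable to a constant) recovers $[f_1]\cdots[f_t]=0$. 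The signs $\epsilon^{t+i}$ and the powers $[-1]^k$ are precisely the bookkeeping that makes the Milnor-Witt version match the motivic cohomology version of \cite{math/0601737}.

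**The hard part will be** the combinatorial bookkeeping in the equivalence of $3$ and $3'$: in the Milnor-Witt setting one must track the noncommutativity carefully, since every transposition of two symbols $[f_i]$ introduces a factor $\epsilon$ and every omission of symbols in the correction sum introduces factors of $[-1]$. The cleanest way to control this is to first fix a reference ordering of the symbols and rewrite every monomial in that order using $4'$, reducing all identities to statements about alternating sums of ordered monomials with explicit $\epsilon$- and $[-1]$-coefficients; then the verification becomes a finite identity of coefficients that can be checked by induction on $t$, the base case $t=1$ being immediate and the inductive step using the two-term Steinberg relation to cancel adjacent pairs. I would organize the proof so that this induction is the single technical lemma, with the degree-two equivalence and the two easy relations $1,2$ supplied as input.
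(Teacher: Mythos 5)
Your proposal has two genuine gaps, both traceable to invoking ``standard $\KMW$ identities'' that are not actually available from the stated relations. The fatal one is your converse derivation of relation 4: setting $g=f$ in the anti-commutativity $4'$ gives only $(1-\epsilon)[f]^2=(2+\eta[-1])[f]^2=0$, i.e.\ $[f]^2$ is killed by the hyperbolic element, and this does not recover $[f]^2=[-1][f]$ --- no rearrangement using $\epsilon=-1-\eta[-1]$ will produce the missing term $[-1][f]$. In the paper, relation 4 is deduced from $3'$, not from $4'$: one chooses $a,b\in K^\times$ with $1+a+b=0$ (possible since $K\neq\bb{F}_2$), notes that $f+af+bf=0$ so $R(f,af,bf)=0$, and subtracts $\sBra{-a}\sBra{-b}$, which vanishes because it is a relation among \emph{constants} and hence holds in $\KMW_*(K)$, imported through relation 1; expanding the difference yields exactly $\sBra{-1}\sBra{f}-\sBra{f}^2$. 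Without applying $3'$ to such a triple, your converse direction cannot close.

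The second gap is your justification of $[f][-f]=0$ ``from $[a][1-a]=0$ by a short manipulation,'' carried out in $\KMW_*(\cO(U))$. The units of $U$ are products of constants and linear forms, and for such $f$ the element $1-f$ is generally \emph{not} invertible on $U$; the classical field manipulation (writing $-f$ as $(1-f)/(1-f^{-1})$) is therefore unavailable, both in $\KMW_*(\cO(U))$ (where the Steinberg relation is only imposed when $a$ and $1-a$ are both units) and from relation 3, which only applies to tuples of units of $U$ summing to $1$. The paper instead derives $[-f][f]=0$ from relation 4 alone, via $[-f][f]=[-1][f]+\aBra{-1}[f]^2=\epsilon([-1][f]-[f]^2)=0$, and only then runs Morel's expansion of $[fg][-fg]$ to obtain $4'$. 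The same invertibility problem undermines your plan to prove $3\Rightarrow 3'$ by telescoping two-term Steinberg relations, since pairs of units of $U$ summing to $1$ are scarce; the paper's essential move, absent from your sketch, is the substitution $\sum_{i=1}^t \frac{f_i}{-f_0}=1$, whose terms are honest units of $U$, making the full $t$-term relation 3 applicable, followed by the explicit expansion showing $\aBra{-f_0}^t\sBra{\tfrac{-f_1}{f_0}}\cdots\sBra{\tfrac{-f_t}{f_0}}=R(f_0,\ldots,f_t)$. (Your converse $3'\Rightarrow 3$ by appending a constant does agree with the paper's identity $R(f_1,\ldots,f_t,-1)=\epsilon\sBra{f_1}\cdots\sBra{f_t}$.)
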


\begin{proof}
We first assume that 3. and 4. are satisfied. Since 1. and 2. are satisfied, we have $[-f]=[-1]+\langle -1\rangle[f]$. As 4. is satisfied and $[-1]=\epsilon[-1]$ in $\KMW_*(K)$, 
\[
 [-f][f]=[-1][f]+\langle -1\rangle[f]^2= \epsilon([-1][f]-[f]^2) =0
\]
and then $[fg][-fg]=[f][g]+\epsilon [g][f]$ for any $g,f\in \Gm(U)$ by \cite[proof of Lemma 3.7]{Morel08}. Suppose next that $\sum_{i=0}^t f_i =0$, so that
	$ \sum^t_{i=1} \frac{f_i}{-f_0}=1$. Combining 3. and the anti-commutativity law, we obtain
	\begin{align}
		0 &=[1]=\sBra{f_j\inv}+\aBra{f_j\inv}\sBra{f_j} &\text{(by 2.)} \\
		\sBra{\frac{-f_i}{f_j}} &
		=\aBra{f_j\inv} \sBra{-f_i}+\sBra{f_j\inv} & \\
		& =\aBra{f_j\inv}(\sBra{-f_i}-\sBra{f_j}) & \text{(by (4))} \notag \\
		&=\aBra{f_j\inv}(\aBra{-1}\sBra{f_i}+\sBra{-1}-\sBra{f_j})& \notag \\
		(\sBra{f_0}-\sBra{-1})^k &= \sum_{i=0}^{k}\binom{k}{i}\sBra{-1}^{k-i}\sBra{f_0}^i &\\
		&=(\sum_{i=0}^{k-1}\binom{k}{i})\sBra{-1}^{k-1}\sBra{f_0}+(-1)^k\sBra{-1}^k & \text{(by 4.)} \notag\\
		&= (-1)^{k-1}\sBra{-1}^{k-1}\sBra{f_0}+(-1)^k\sBra{-1}^k & \notag
	\end{align}
	 
	\begin{align*}
	0&=(-\aBra{f_0})^t\sBra{\frac{-f_1}{f_0}}\sBra{\frac{-f_2}{f_0}}\ldots\sBra{\frac{-f_t}{f_0}} & \text{(by 3.)} \\
	&=(\sBra{f_0}-\sBra{-1}-\aBra{-1}\sBra{f_1})\ldots(\sBra{f_0}-\sBra{-1} -\aBra{-1}\sBra{f_t})& \text{(by (5))}\\
	&=\epsilon^t \widehat{\sBra{f_0}} \sBra{f_1}\ldots\sBra{f_t}
	+  \sum_{i=1}^t \epsilon^{t-1}\sBra{f_1}\ldots \widehat{\sBra{f_i}}(\sBra{f_0}-\sBra{-1}) \ldots\sBra{f_t} + &\\ 
	&\sum_{i<j}(\sBra{f_0}-\sBra{-1})^2 \sBra{f_1}\ldots \widehat{\sBra{f_i}}\ldots \widehat{\sBra{f_j}} \ldots\sBra{f_t}
	+\ldots &\\
	&=\sum_{i=0}^t \epsilon^{t+i}\sBra{f_0}\ldots \widehat{\sBra{f_i}} \ldots\sBra{f_t} + &\\
	&\sum_{0\leq i_0<\ldots<i_k\leq t} (-1)^k\sBra{-1}^k\sBra{f_0}\ldots \widehat{\sBra{f_{i_0}}}\ldots \widehat{\sBra{f_{i_k}}} \ldots\sBra{f_t} & \text{(by (6))} \\
	&=R(f_0,\ldots,f_t).&
	\end{align*}
Conversely, suppose that $3^\prime$ and $4^\prime$ hold. A direct calculation shows that we have $R(-1,f_1,\ldots,f_t)=(-\aBra{-1})^t \sBra{f_1}\ldots\sBra{f_t}=\epsilon^t \sBra{f_1}\ldots\sBra{f_t}$, and consequently that 3. also holds. For every field $K\neq \bb{F}_2$, we have $1+a+b=0 $ for some $a,b\neq 0$ and it follows from $\sBra{-a}\sBra{-b} =0$ in $\KMW_*(K)$ that 
	\begin{eqnarray*}
	&R(f,af,bf)=R(f,af,bf)-\sBra{-a}\sBra{-b}=R(f,af,bf)-\sBra{-\frac{af}{f}}\sBra{-\frac{bf}{f}}\\
	&=R(f,af,bf)-(\aBra{-1}\sBra{af}+\sBra{-1}-\sBra{f})(\aBra{-1}\sBra{bf}+\sBra{-1}-\sBra{f})\\
	&=-\sBra{-1}\sBra{f}+\sBra{-1}^2-(\sBra{f}-\sBra{-1})^2=\sBra{-1}\sBra{f}-\sBra{f}^2. 
	\end{eqnarray*}
\end{proof}

\begin{remark}
	\label{Relation}
	Observe that the following properties of the relations $R$ and anti-commutativity hold:
\begin{enumerate}
\item For any $a,b\in \Gm(U)$, we have $[a/b]=-\aBra{b\inv}R(b,-a)$.
\item For any $f_0,\ldots,f_t\in\Gm(U)$, by direct computation, we have 
\[
R(f_0,\ldots, f_t) -\epsilon^i\sBra{f_i}R(f_0,\ldots,\widehat{f_i},\ldots,f_t) = P(f_0,\ldots,\widehat{f_i},\ldots,f_t)
\] 
for some polynomial $P$. We shall use the anti-commutativity and the fact that $\sBra{-1}=\epsilon^j\sBra{-1}$ for any $j\geq 0$ in the computation.
\item For any $f_0,\ldots,f_t\in K^\times$ such that $\sum_{i=0}^t f_i =0 $, we have $R(f_0,\ldots,f_t)=0$ in $\KMW_*(K)$.
\end{enumerate}
\end{remark}

The following lemma will prove useful in the proof of the main theorem.

\begin{lemma}\label{lem:basefield}
Any morphism $\phi: \Mt(U^V_I)\to T $ in $\DMt$ such that 
	\[ \Mt(U^Y_{I_Y})(1)[1]\xr{\beta^Y}\Mt(U^V_I)\xr{\phi}T \]
is trivial for every $ Y\in I $ factors through $ \Mt(K)$, i.e. there is a morphism $\psi:\Mt(K)\xr{}T$ such that the following diagram 
	\[\xymatrix{
		\Mt(U^V_I)\ar[d]\ar[r]^-{\phi}&T\\
		\Mt(K)\ar[ur]_-{\psi}&
		}\]
	is commutative.
\end{lemma}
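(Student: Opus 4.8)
The plan is to argue by induction on the number $|I|$ of hyperplanes, with $V$ fixed. Write $\pi_I\colon \Mt(U^V_I)\to \Mt(K)=\one$ for the map induced by the structure morphism $U^V_I\to \mathrm{Spec}(K)$. When $|I|=0$ we have $U^V_I=V\simeq \af^N$, so $\pi_I$ is an isomorphism by homotopy invariance and the statement is vacuous (there is no $Y\in I$). For the inductive step, fix $Y_0\in I$ and set $I'=I-\{Y_0\}$. The split triangle
\[
\Mt(U^{Y_0}_{I_{Y_0}})(1)[1]\xr{\beta^{Y_0}}\Mt(U^V_I)\xr{\alpha^{Y_0}}\Mt(U^V_{I'})\xr{0}
\]
exhibits $\alpha^{Y_0}$ as a split epimorphism with $\beta^{Y_0}$ the inclusion of a complementary summand. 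Since $\phi\circ\beta^{Y_0}=0$ by hypothesis, $\phi$ annihilates that summand, so choosing a section $s^{Y_0}$ of $\alpha^{Y_0}$ gives $\phi=\bar\phi\circ\alpha^{Y_0}$ with $\bar\phi:=\phi\circ s^{Y_0}\colon \Mt(U^V_{I'})\to T$.

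The heart of the argument is to show that $\bar\phi$ again satisfies the hypothesis for the smaller arrangement $I'$, i.e. $\bar\phi\circ\beta^Y_{I'}=0$ for every $Y\in I'$, where $\beta^Y_{I'}$ is the Gysin map attached to removing $Y$ from $U^V_{I'}$. For this I would compare two Gysin triangles. The open immersion $U^V_{I-\{Y\}}\hookrightarrow U^V_{I'-\{Y\}}$ (putting $Y_0$ back in) is compatible with the closed strata $U^Y_{I_Y}\hookrightarrow U^Y_{(I')_Y}$ and with the open complements $U^V_I\hookrightarrow U^V_{I'}$; exactly as in the proof of Lemma \ref{lem:computation}, this induces a morphism of Gysin triangles, which after rotation yields a commutative square
\[
\xymatrix{
\Mt(U^Y_{I_Y})(1)[1]\ar[r]^-{\beta^Y}\ar[d]_-{\rho_Y(1)[1]} & \Mt(U^V_I)\ar[d]^-{\alpha^{Y_0}}\\
\Mt(U^Y_{(I')_Y})(1)[1]\ar[r]_-{\beta^Y_{I'}} & \Mt(U^V_{I'})
}
\]
where $\rho_Y\colon \Mt(U^Y_{I_Y})\to \Mt(U^Y_{(I')_Y})$ is induced by the open immersion $U^Y_{I_Y}\hookrightarrow U^Y_{(I')_Y}$. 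The crucial observation is that this immersion is itself the complement of a single hyperplane in $Y$, namely $W:=Y_0\cap Y$ (and is an isomorphism when $W=\emptyset$), so it fits into its own split Gysin triangle
\[
\Mt(U^Y_{I_Y})\xr{\rho_Y}\Mt(U^Y_{(I')_Y})\to \Mt(W\cap U^Y_{(I')_Y})(1)[2]\xr{+1},
\]
which exhibits $\rho_Y$, hence $\rho_Y(1)[1]$, as a split epimorphism.

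Granting the square, the transfer of the hypothesis is immediate: for $Y\in I'$,
\[
(\bar\phi\circ\beta^Y_{I'})\circ(\rho_Y(1)[1]) = \bar\phi\circ\alpha^{Y_0}\circ\beta^Y = \phi\circ\beta^Y = 0,
\]
and since $\rho_Y(1)[1]$ is split epi, precomposing with a section cancels it to give $\bar\phi\circ\beta^Y_{I'}=0$. Thus $\bar\phi$ satisfies the hypothesis for $I'$, so by induction it factors as $\bar\phi=\psi\circ\pi_{I'}$. As the structure maps are compatible with the open immersion, $\pi_{I'}\circ\alpha^{Y_0}=\pi_I$, whence $\phi=\bar\phi\circ\alpha^{Y_0}=\psi\circ\pi_I$ factors through $\Mt(K)$, as required. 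The only genuinely non-formal step is producing the comparison square from Proposition \ref{Gysin} and recognizing $\rho_Y$ as split epi; I expect the main care to lie in the bookkeeping of which sub-arrangement induces which stratum (in particular the degenerate case $Y_0\cap Y=\emptyset$) and in checking that the Gysin triangles are functorial enough under these open immersions to assemble the morphism of triangles used above.
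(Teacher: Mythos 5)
Your proposal is correct and follows essentially the same route as the paper's own proof: induction on $\vert I\vert$, factoring $\phi$ through $\alpha^{Y_0}$ via the split Gysin triangle, and transferring the hypothesis to $I'$ through the commutative square whose left vertical map $\rho_Y(1)[1]$ is a split epimorphism. The only (harmless) cosmetic differences are that you factor via an explicit section $s^{Y_0}$ where the paper uses the cone factorization, and you spell out why $\rho_Y$ is split epi (its own one-hyperplane Gysin triangle), which the paper merely asserts.
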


\begin{proof}
We prove as usual the result by induction on $\vert I\vert$, the result being trivial if $\vert I\vert=0$, i.e. if $U_I^V\cong \af^N$. By assumption, $\phi$ factors through $\Mt(U^V_{I-\{Y\}})$, i.e. we have a commutative diagram
\[
\xymatrix{\Mt(U^V_I)\ar[r]^-{\alpha^Y}\ar[rd]_-\phi &  \Mt(U^V_{I-\{Y\}})\ar[d]^-{\phi_0}\\
& T.}
\]
For $H\in I'=I-\{Y\}$, we have an associated Gysin morphism $\beta^H:\Mt(U^H_{I_H})(1)[1]\to \Mt(U^V_I)$ which induces a commutative diagram
	\[\xymatrix{
		\Mt(U^H_{I_H})(1)[1]\ar[d]_-{\alpha^Y(1)[1]}\ar[r]^-{\beta^H}&\Mt(U^V_I)\ar[d]^-{\alpha^Y}\ar[r]^-{\phi}& T\\
		\Mt(U^H_{I'_H})(1)[1]\ar[r]_-{\beta^H}&\Mt(U^V_{I'})\ar[ur]_{\phi_0}&
	}
	\]
	in which the morphism $\alpha^Y(1)[1]$ on the left is split surjective. It follows that $ \phi_0\circ \beta^H \circ \alpha^Y(1)[1] = \phi \circ \beta^H=0 $ implies $ \phi_0\circ \beta^H =0 $. We conclude by induction.
\end{proof}

\begin{proposition}
Let $ S $ be an essentially smooth K-scheme and let $f_i \in \Gm(S)$ be such that $\sum_{i=0}^t f_i =0 $. Then 
\[
R(f_0,\ldots,f_t) = 0\ \text{in}\ \HH(S).
\]
\end{proposition}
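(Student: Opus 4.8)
The plan is to argue by induction on $t$, reducing the general statement to a single universal instance and then applying the structural results of the previous two sections. First I would observe that the data $(f_0,\dots,f_t)$ defines a morphism $g\colon S\to U$, where $U\subset V:=\{\sum_{i=0}^t x_i=0\}\cong\af^t$ is the complement of the arrangement $I=\{Y_i\}_{0\le i\le t}$ with $Y_i=\{x_i=0\}\cap V$ (these $t+1$ hyperplanes of $V$ are distinct). Since $R(x_0,\dots,x_t)$ is a polynomial in the classes $[x_i]$ with coefficients in $\HH(K)$ and $g^*[x_i]=[f_i]$, functoriality gives $R(f_0,\dots,f_t)=g^*R(x_0,\dots,x_t)$; hence it suffices to prove $R(x_0,\dots,x_t)=0$ in $\HH(U)$, a class that is homogeneous of bidegree $(t,t)$. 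For the base case $t=1$ one has $f_1=-f_0$, and Remark \ref{Relation}(1) gives $R(f_0,-f_0)=[f_0/f_0]=[1]=0$.

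For the inductive step I would apply Lemma \ref{lem:basefield} to $R:=R(x_0,\dots,x_t)\in\HH^{t,t}(U)$: it is enough to check that $\beta^{Y_i}_*(R)=0$ for every $Y_i\in I$, for then $R$ factors through $\Mt(\mathrm{Spec}(K))$, i.e. $R=\pi^*c$ for some $c\in\HH^{t,t}(K)\cong\KMW_t(K)$, with $\pi\colon U\to\mathrm{Spec}(K)$. To compute $\beta^{Y_i}_*(R)$, use Remark \ref{Relation}(2) to write $R=\epsilon^i[x_i]\,R(x_0,\dots,\widehat{x_i},\dots,x_t)+P$, where $P=P(x_0,\dots,\widehat{x_i},\dots,x_t)$ does not involve $x_i$. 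Both $P$ and $R(x_0,\dots,\widehat{x_i},\dots,x_t)$ are built from the functions $x_j$ ($j\neq i$), which are units on $U^V_{I-\{Y_i\}}$, so they lie in the image of $\alpha^{Y_i}_*$; in particular $\beta^{Y_i}_*(P)=0$ by exactness of \eqref{short}.

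It remains to treat the term $\epsilon^i[x_i]\,R(x_0,\dots,\widehat{x_i},\dots,x_t)$. Moving $[x_i]$ to the right using graded anti-commutativity of weight-one classes (which alters the term only by a unit of $\HH(K)$) and applying the description of $\beta^{Y_i}_*$ from Lemma \ref{lem:computation} and the proof of Proposition \ref{prop:generation} with equation $t=x_i$, I obtain that $\beta^{Y_i}_*$ of this term is a unit multiple of the restriction of $R(x_0,\dots,\widehat{x_i},\dots,x_t)$ to $U^{Y_i}_{I_{Y_i}}$. On $Y_i$ one has $x_i=0$, hence $\sum_{j\neq i}x_j=0$, so the restricted functions are units of $U^{Y_i}_{I_{Y_i}}$ summing to zero; by the inductive hypothesis this restriction vanishes. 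Thus $\beta^{Y_i}_*(R)=0$ for all $i$, and $R=\pi^*c$ as above. To see that $c=0$, restrict along the generic point $\eta\colon\mathrm{Spec}\,K(U)\to U$: on one side $\eta^*R=R(x_0|_\eta,\dots,x_t|_\eta)=0$ in $\KMW_t(K(U))$ by Remark \ref{Relation}(3), since the coordinate functions are units of $K(U)$ summing to zero; on the other side $\eta^*R=c|_{K(U)}$. As $K(U)$ is purely transcendental over $K$, the map $\KMW_t(K)\to\KMW_t(K(U))$ is injective (\cite{Morel08}), whence $c=0$ and $R=0$.

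The main obstacle is the computation of $\beta^{Y_i}_*(R)$: it requires combining the recursion of Remark \ref{Relation}(2) with the explicit formula for $\beta^{Y_i}_*$ and, crucially, observing that the hypothesis $\sum_i x_i=0$ is inherited on each boundary hyperplane $Y_i$ (in the form $\sum_{j\neq i}x_j=0$), which is precisely what feeds the inductive hypothesis. The remaining steps are routine: tracking the unit powers of $\epsilon$ is irrelevant because each such term is multiplied by a class that already vanishes, verifying that the auxiliary classes are pulled back along $\alpha^{Y_i}$ is immediate, and the final vanishing of the base-field class $c$ is formal once Remark \ref{Relation}(3) and injectivity for purely transcendental extensions are available.
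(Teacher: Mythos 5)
Your proposal is correct and takes essentially the same route as the paper: the same reduction to the universal arrangement inside $H=\{\sum_i x_i=0\}$, the same computation of $\beta^{Y_i}_*(R)$ by combining Remark \ref{Relation}(2) with Lemma \ref{lem:computation} and killing the polynomial term $P$ via $\beta^{Y_i}_*\circ\alpha^{Y_i}_*=0$, the same induction starting from $R(f,-f)=0$, and the same appeal to Lemma \ref{lem:basefield}. The only (harmless) deviation is the final identification of the base-field class: the paper evaluates at a $K$-rational point of $U^H_I$ to get $\psi=R(\lambda_0,\dots,\lambda_t)=0$ in $\KMW_*(K)$, whereas you restrict to the generic point and invoke injectivity of $\KMW_t(K)\to\KMW_t(K(U))$ for the purely transcendental extension $K(U)/K$ --- both steps are valid.
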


\begin{proof}
The global sections $f_0,\ldots,f_t$ yield a morphism $j=(f_0,\ldots,f_t) : S\xr{}\af^{t+1} $ which restrict to a morphism $j:S\xr{}U^H_{I} $, where $ H\subset \af^{t+1}$ is given by $\sum_{i=0}^t x_i =0 $ and $ I=\{\{x_1=0\}, \ldots,\{x_t=0\} \} $. Since $R(f_0,\ldots,f_t)= j^*(R(x_0,\ldots,x_t))$, we can reduce the proposition to $S=U^H_{I}$.

For any $x_j$, we set $Y_j:=\{x_j=0\}\subset H$ and we obtain a Gysin morphism $\beta_j:\Mt(U^{Y_j}_{I_{Y_j}})(1)[1]\to \Mt(U^H_I)$ and a composite
	\[ 
\Mt(U^{Y_j}_{I_{Y_j}})(1)[1]\xr{\beta_j}\Mt(U^H_I)\xr{R(x_0,\ldots,x_t)}\one(t)[t]. 
\] 
By Remark \ref{Relation} and Lemma \ref{lem:computation},
\begin{eqnarray*}
&R(x_0,\ldots,x_t)\circ \beta_j = (\epsilon^j\sBra{x_j}R(x_0,\ldots,\widehat{x_j},\ldots,x_t) + P(x_0,\ldots,\widehat{x_j},\ldots,x_t))\circ  \beta_j\\
		&= \epsilon^j(\sBra{x_j}R(x_0,\ldots,\widehat{x_j},\ldots,x_t))\circ  \beta_j + P(x_0,\ldots,\widehat{x_j},\ldots,x_t)\circ \alpha_j\circ \beta_j\\
		&=  \epsilon^jR(x_0|_{U^{Y_j}_{I_{Y_j}}},\ldots,\widehat{x_j},\ldots,x_t|_{U^{Y_j}_{I_{Y_j}}}).
	\end{eqnarray*}
	
As $R(f,-f)=0$ for $f\in \Gm(S)$ by Remark \ref{Relation}, we obtain by induction that $R(x_0,\ldots,x_t)\circ \beta_j =0$ for any $j=0,\ldots,t$. Applying Lemma \ref{lem:basefield}, we obtain a commutative diagram 
\[
\xymatrix{
		\Mt(U^H_{I})\ar[d]\ar[rr]^{R(x_0,\ldots,x_t)}&&\one(t)[t]\\
		\Mt(K)\ar[urr]_{\psi}&
	}
\]
	As $\mathrm{char}(K)\neq 2$, $ U^H_{I}$ has a $K$ rational point $ (\lambda_0,\ldots,\lambda_t)\in \af^{t+1}$, and we obtain a diagram
	\[\xymatrix{
		\Mt(K)\ar[d]_u\ar[drr]^{R(\lambda_0,\ldots,\lambda_t)}&\\
		\Mt(U^H_{I})\ar[d]\ar[rr]^{R(x_0,\ldots,x_t)}&&\one(t)[t].\\
		\Mt(K)\ar[urr]_{\psi}&
	}\]
	The vertical composite being the identity, $\psi =R(\lambda_0,\ldots,\lambda_t)$ and the latter is trivial by the relations in Milnor-Witt $K$-theory.
\end{proof}

Applying lemma \ref{equiv}, we obtain the following corollary.

\begin{corollary}\label{cor:relations}
	Let $ S $ be an essentially smooth smooth $ K $-scheme.
	\begin{enumerate}
		\item For any $f_1,\ldots, f_t\in  \Gm(S)$ such that $\sum_{i=1}^t f_i =1 $, we have  
		\[
		[f_1][f_2]\cdots[f_t]=0\in \HH(S).
		\]
		\item For any $ f \in\Gm(S)$, we have $[f]^2-[-1][f]=0$ in $\HH(S)$. 
	\end{enumerate}
\end{corollary}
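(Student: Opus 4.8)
The plan is to deduce both assertions directly from the preceding Proposition and Lemma~\ref{equiv}. The Proposition establishes that $R(f_0,\ldots,f_t)=0$ in $\HH(S)$ for any essentially smooth $S$ whenever $\sum_{i=0}^t f_i=0$, which is exactly relation~$3'$. To invoke Lemma~\ref{equiv} and pass from $\{3',4'\}$ to $\{3,4\}$, I first record that the anti-commutativity relation $4'$, namely $[f][g]=\epsilon[g][f]$ for $f,g\in\Gm(S)$, holds in $\HH(S)$. This is because the classes $[f]\in\HH^{1,1}(S)$ are the images under the graded ring homomorphism $s$ of the degree-one generators $[f]\in\KMW_1(\cO(S))$, and the $\epsilon$-graded-commutativity $[f][g]=\epsilon[g][f]$ is a standard consequence of the defining $\KMW$-relations (Morel); being a morphism of graded rings, $s$ transports it to $\HH(S)$.

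With $3'$ and $4'$ available, I would run the reverse direction of Lemma~\ref{equiv} verbatim over $S$. Concretely, for $f_1,\ldots,f_t\in\Gm(S)$ with $\sum_{i=1}^t f_i=1$, applying $3'$ to the identity $R(f_1,\ldots,f_t,-1)=\epsilon[f_1]\cdots[f_t]$ gives $\epsilon[f_1]\cdots[f_t]=0$, and since $\epsilon$ is a unit this yields assertion~(1). For $f\in\Gm(S)$, choosing $a,b\neq 0$ with $1+a+b=0$ (possible because $\mathrm{char}(K)\neq 2$ forces $K\neq\bb{F}_2$) and applying $3'$ to the identity $R(f,af,bf)=[-1][f]-[f]^2$ gives assertion~(2).

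I expect no real obstacle: the substantive work is contained in the Proposition. The only point worth checking is that Lemma~\ref{equiv}, though stated for the complement $U$ of a hyperplane arrangement, applies over an arbitrary essentially smooth $S$; this is immediate once one observes that every step in its proof is a formal manipulation of the $\KMW$-relations (relations~1 and~2, anti-commutativity, and the Steinberg vanishing $[-a][-b]=0$ when $1+a+b=0$) transported through $s$, none of which is special to $U$.
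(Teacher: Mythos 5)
Your overall route is exactly the paper's: the Proposition supplies relation $3'$ over any essentially smooth $S$ (it is stated and proved in that generality), and the reverse implication of Lemma~\ref{equiv}, whose manipulations are formal and work verbatim over $S$, converts $\{3',4'\}$ into relations 3 and 4 via the identities $R(f_1,\ldots,f_t,-1)=\epsilon\sBra{f_1}\cdots\sBra{f_t}$ and $R(f,af,bf)=\sBra{-1}\sBra{f}-\sBra{f}^2$, just as you do; the paper's own proof is precisely ``apply Lemma~\ref{equiv}''. So the skeleton is correct and matches the paper.

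The genuine gap is your justification of the anti-commutativity input $4'$. You claim $[f][g]=\epsilon[g][f]$ already holds in $\KMW_*(\cO(S))$ as ``a standard consequence of the defining relations'' and then push it through $s$. For a field this is Morel's Lemma 3.7, but its proof is not formal in the presentation: the key step $[u][-u]=0$ rests on writing $-u=\frac{1-u}{1-u^{-1}}$ and applying the Steinberg relation, which requires $1-u$ and $1-u^{-1}$ to be units. In $\cO(S)$ this fails badly: already for $S=\Gm$, where $\cO(S)=K[t,t^{-1}]$ has units $\lambda t^n$, the only Steinberg relations available in the presentation are among constants, and $[t][-t]=0$ (hence $[t][g]=\epsilon[g][t]$) is not derivable from the relations alone. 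Note also that the forward direction of Lemma~\ref{equiv} does derive $4'$, but from relations 3 and 4 --- exactly what you are trying to prove --- so invoking it here would be circular. The correct source of $4'$ in $\HH(S)$ is not $\KMW$ of the coordinate ring but the $\epsilon$-graded commutativity of MW-motivic cohomology itself: $\HH(S)=\bigoplus_{p,q}\rHom_{\DMt}(\Mt(S),\one(q)[p])$ carries a product coming from a commutative monoid structure in the symmetric monoidal category $\DMt$, and the symmetry switch on $\one(1)[1]\otimes\one(1)[1]$ is multiplication by $\epsilon=-\aBra{-1}$; since the classes $[f]$, $[g]$ sit in bidegree $(1,1)$, this gives $[f][g]=\epsilon[g][f]$ for every essentially smooth $S$. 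With that substitution your argument coincides with the paper's proof, which silently uses the same fact.
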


\section{Proof of the main theorem}
In this section, we prove Theorem \ref{thm:mainthm}. We denote by $ J_U\subset \HH(K) \{\Gm(U)\}$ the ideal generated by the relations 
\begin{enumerate}[label=(\arabic*)]
	\item $(f)-[f]$, for $ f \in K^{\times}\subset \Gm(U) $.
	\item $ (f)+(g)+\eta(f)(g)-(fg) $, for $ f,g\in \Gm(U) $.
	\item $(f_1)(f_2)\cdots(f_t)$, for any $f_1,\ldots,f_t \in \Gm(U)$ such that $\sum_{i=1}^t f_i =1$.
	\item $(f)^2-[-1](f)$, for $ f \in\Gm(U)$. 
\end{enumerate}
By Lemma \ref{equiv}, $J_U\subset \HH(K) \{\Gm(U)\}$ is in fact generated by 
\begin{itemize}
	\item[(1)] $(f)-[f]$, for $ f \in K^{\times}\subset \Gm(U) $.
	\item[(2)] $ (f)+(g)+\eta(f)(g)-(fg) $, for $ f,g\in \Gm(U) $.
	\item[($3^\prime$)] (anti-commutativity) $(f)(g)-\epsilon(g)(f)$ for any $f,g\in \Gm(U)$.
	\item[($4^\prime$)] $R(f_0,\ldots,f_t)$ given by 
	\[
	\sum_{i=0}^t \epsilon^{t+i}(f_0)\ldots \widehat{(f_i)} \ldots(f_t) +
	\sum_{0\leq i_0<\ldots<i_k\leq t} (-1)^k\sBra{-1}^k(f_0)\ldots \widehat{(f_{i_0})}\ldots \widehat{(f_{i_k})} \ldots(f_t),
	\] 
	for any $f_0,\ldots,f_t \in \Gm(U)$ such that $\sum_{i=0}^t f_i =0$.
\end{itemize}

In view of Corollary \ref{cor:relations}, the morphism $\HH(K)\{\Gm(U)\}\to \HH(U)$ defined by $(f)\mapsto [f]$ induces a morphism of $\HH(K)$-algebras 
\[
\rho:\\ \HH(K)\{\Gm(U)\}/J_U \to \HH(U).
\] 
Now, choose linear polynomials $\phi_1,\ldots,\phi_s$ that define the hyperplanes $Y_i\in I$ and let $J^\prime_U\subset \HH(K)\{\Gm(U)\}$ be the ideal generated by the relations (1), (2), ($3^\prime$) and ($4^\prime$) for elements of the form $f_j=\lambda_j\phi_{i_j}$ or $f_j=\lambda_j$ for $\lambda_j\in K^\times$ and $\phi_{i_j}\in \{\phi_1,\ldots,\phi_s\}$. We have a string of surjective morphisms of $\HH(K)$-algebras
\[
\HH(K)\{\Gm(U)\}/J^\prime_U\to\HH(K)\{\Gm(U)\}/J_U \xr{\rho} \HH(U)
\]
whose composite we denote by $\rho^\prime$. 

\begin{theorem}\label{thm:main}
The morphism of $\HH(K) $ algebra 
\[
\HH(K)\{\Gm(U)\}/J_U \xr{\rho} \HH(U)
\] is an isomorphism.
\end{theorem}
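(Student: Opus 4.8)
The plan is to deduce the theorem from the surjectivity already in hand together with a comparison of Gysin-type short exact sequences. Since $\rho$ is surjective by Proposition \ref{prop:generation}, and since in the string of surjections $\HH(K)\aBra{\Gm(U)}/J'_U \to \HH(K)\aBra{\Gm(U)}/J_U \xr{\rho}\HH(U)$ the composite $\rho'$ is surjective, it suffices to prove that $\rho'$ is injective: an injective surjection is bijective, the first arrow is then a surjection through which an isomorphism factors and hence is itself an isomorphism, and this forces $\rho$ to be one. I would therefore work with the economical presentation $B(U):=\HH(K)\aBra{\Gm(U)}/J'_U$, which by relations (1) and (2) is generated as an $\HH(K)$-algebra by the finitely many classes $(\phi_1),\ldots,(\phi_s)$ attached to the chosen linear forms. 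The argument runs by induction on $\vert I\vert$ (and $\dim V$); the base case $\vert I\vert=0$ is the identity $\HH(K)=B(\af^N)\xr{\sim}\HH(\af^N)=\HH(K)$, using Proposition~3.1 and the fact that $\Gm(\af^N)=K^\times$.

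For the inductive step, fix $Y=Y_s\in I$ and set $I'=I-\{Y\}$. I would construct an algebraic analogue of the split sequence \eqref{short}, a complex
\[
B(U^V_{I'})\xr{\bar\alpha}B(U^V_I)\xr{\bar\beta}B(U^Y_{I_Y})(1)[1]\to 0,
\]
fitting with $\rho'$ into a commutative ladder over \eqref{short} whose outer vertical maps $\rho'_L$ and $\rho'_R$ are isomorphisms by the inductive hypothesis. The map $\bar\alpha$ is induced by the restriction of units $\Gm(U^V_{I'})\to\Gm(U^V_I)$, sending $(\phi_i)\mapsto(\phi_i)$ for $i<s$, and is manifestly well defined. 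The map $\bar\beta$ is the combinatorial residue along $Y$: on a reduced monomial it strips a terminal factor $(\phi_s)=(t)$ and restricts the remaining factors to $Y$, while annihilating the monomials not involving $(\phi_s)$. Its compatibility with $\rho'$ is exactly the content of Lemma \ref{lem:computation} and the displayed formula in the proof of Proposition \ref{prop:generation}, which give $\beta^Y_*([(f_1)]\cdots[(f_n)](t))=[(f_1)|_Y]\cdots[(f_n)|_Y]$.

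The verification that $\bar\beta$ descends to the quotient $B(U^V_I)$ — that is, annihilates the defining relations of $J'_U$ — is the technical heart of the argument, and the step I expect to demand the most care. Relations $(1),(2),(3')$ and the relations $R(f_0,\ldots,f_t)$ not involving $\phi_s$ lie in the span of monomials without $(\phi_s)$ and are killed outright; for a relation $R(f_0,\ldots,f_t)$ in which some $f_i$ is a unit multiple of $\phi_s$, I would invoke Remark \ref{Relation}(2), which rewrites $R(f_0,\ldots,f_t)-\epsilon^i(f_i)R(f_0,\ldots,\widehat{(f_i)},\ldots,f_t)$ as a polynomial $P$ in the remaining variables, so that the residue of $R$ is, up to the unit $\epsilon^i$, the restricted relation $R(\ldots|_Y)$ inside $B(U^Y_{I_Y})$ — precisely the computation carried out in the Proposition preceding Corollary \ref{cor:relations}. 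This is where the twisted Milnor-Witt combinatorics, with its factors $\epsilon$ and $\langle -1\rangle$ and the squaring relation $(4)$, must be tracked exactly.

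Granting well-definedness, I would conclude by a diagram chase rather than by proving exactness of the top row head-on. Using the anti-commutativity relation $(3')$ to move every occurrence of $(\phi_s)$ to the right and relation $(4)$ to remove squares, every element of $B(U^V_I)$ can be written as $a+b(\phi_s)$ with $a,b\in\mathrm{im}\,\bar\alpha$; hence $B(U^V_I)=\mathrm{im}\,\bar\alpha+\mathrm{im}\,\sigma$, where $\sigma$ lifts a unit on $U^Y_{I_Y}$ to one on $U^V_{I'}$ and multiplies by $(\phi_s)$, so that $\bar\beta\circ\sigma=\mathrm{id}$. Now take $x=\bar\alpha(u)+\sigma(v)$ with $\rho'(x)=0$: applying $\beta^Y_*$ and the commutativity of the right-hand square gives $\rho'_R(v)=0$, whence $v=0$ since $\rho'_R$ is an isomorphism; then $\rho'(\bar\alpha(u))=\alpha^Y_*(\rho'_L(u))=0$ forces $u=0$, because $\alpha^Y_*$ is injective (the bottom row \eqref{short} being split exact) and $\rho'_L$ is an isomorphism. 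Thus $\rho'$ is injective, which completes the induction and the proof.
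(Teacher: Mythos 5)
Your overall architecture — reduce to $\rho'$, induct on $\vert I\vert$ with the trivial base case, and compare an algebraic analogue of the split sequence \eqref{short} with the topological one via a commutative ladder whose outer maps are isomorphisms by induction — is the same as the paper's. One structural difference is worth noting: you propose to define $\bar\beta$ combinatorially (strip a terminal $(\phi_s)$, restrict, kill monomials without $(\phi_s)$) and then verify it annihilates $J'_{U^V_I}$, whereas the paper sidesteps this well-definedness problem entirely by \emph{defining} $\tilde\beta$ as the unique lift of $\beta^Y_*\circ\rho'$ through the inductive isomorphism $\rho'_R$. With the paper's definition, commutativity of the right square is automatic, $\tilde\beta\circ\tilde\alpha=0$ and surjectivity are easy, and all the genuine work is concentrated in one place: exactness of the top row in the middle.

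And it is exactly that concentrated difficulty that your final diagram chase assumes rather than proves. Your section $\sigma$ is not well defined: a class $v\in \HH(K)\aBra{\Gm(U^Y_{I_Y})}/J'_{U^Y_{I_Y}}$ has many lifts $\tilde v$, differing by elements of $\ker\tilde\iota$, and this kernel is nontrivial — for $w\in\ker\tilde\iota$ the element $\bar\alpha(w)(\phi_s)$ is in general a \emph{nonzero} element of the middle algebra (it lies in $\mathrm{im}\,\bar\alpha$, but showing even that is the whole point). Concretely, your chase goes: $\rho'(x)=0$ gives $v=0$, and you then silently replace $\sigma(v)$ by $0$, i.e.\ you assert that $\tilde\iota(x_1)=0$ implies $\bar\alpha(x_1)(\phi_s)=0$ (or at least lies in $\mathrm{im}\,\bar\alpha$). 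That implication is not formal; it is the middle-exactness statement itself, so the argument is circular at this step. The paper fills the gap in two moves you omit: (i) it identifies $\ker\tilde\iota$ as the subgroup $L'$ generated by elements $R(f_0,\ldots,f_t)$ with each $f_j=\lambda_j\phi_{i_j}$ ($i_j>1$) or $f_j=\lambda_j$ and $\sum_j f_j\vert_{U^Y_{I_Y}}=0$, using the explicit generators $\lambda\phi_i/\phi_j$ and $\lambda\phi_i$ of $\ker\bigl(\Gm(U^V_{I'})\to\Gm(U^Y_{I_Y})\bigr)$ from \cite{math/0601737} together with Remark \ref{Relation}(1); and (ii) for such a generator $x_1=R(f_0,\ldots,f_t)$ with $\sum_j f_j=-\mu\phi_1$, it uses the relation $R(\mu\phi_1,f_0,\ldots,f_t)=0$ in the middle algebra and Remark \ref{Relation}(2) to rewrite $(\phi_1)\tilde\alpha(x_1)$ as $\tilde\alpha(P(f_0,\ldots,f_t))-\tilde\alpha(\aBra{\phi_1}(\mu)x_1)\in\mathrm{im}\,\tilde\alpha$. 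Without steps (i)–(ii) your induction does not close; with them, your chase becomes the paper's proof.
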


\begin{proof}
It suffices to prove that $\rho^\prime$ is an isomorphism. To see this, we work again by induction on $\vert I\vert$. If $\vert I\vert=0$, we have $U\cong \af^N$ for some $N\in\bb{N}$. By homotopy invariance, we have to prove that the map
\[
\rho^\prime:\HH(K)\{\Gm(K)\}/J^\prime_K \to \HH(K)
\]
is an isomorphism. Now, the morphism of $\HH(K)$-algebras
\[\HH(K)\to \HH(K)\{\Gm(K)\}/J^\prime_K
\] 
is surjective by Relation 1. Its composite with $\rho^\prime$ is the identity and we conclude in that case.

Assume now that $Y\in I$ is defined by $\phi_1=0$ and that we have isomorphisms
	\[ \HH(K)\{\Gm(U^V_{I'})\}/J^\prime_{U^V_{I'}} \xr{\sim} \HH(U^V_{I'}),\]
	\[\HH(K)\{\Gm(U^Y_{I_Y})\}/J^\prime_{U^Y_{I_Y}} \xr{\sim} \HH(U^Y_{I_Y}).\]
The morphism $U_I^V\to U^V_{I'}$ induces a morphism  $\Gm(U^V_{I'})\to \Gm(U^V_{I})$ and then a commutative diagram
\[\small
\xymatrix@C=1.0em{&  \HH(K)\{\Gm(U^V_{I'})\}/J^\prime_{U^V_{I'}}\ar[r]^-{\tilde\alpha}\ar[d]_-{\cong} & \HH(K)\{\Gm(U_I^V)\}/J^\prime_{U^V_I}\ar[d]^-{\rho^\prime}\ar[r]^-{\tilde\beta} & \HH(K)\{\Gm(U^Y_{I_Y})\}/J^\prime_{U^Y_{I_Y}}\ar[d]^-{\cong} & \\
0\ar[r] & \HH(U^V_{I'}) \ar[r]_-{\alpha^Y_*} & \HH(U^V_{I})\ar[r]_-{\beta^Y_*} & \HH(U^Y_{I_Y})\ar[r] & 0}
\]
in which $\tilde\beta$ is the unique lift of $	\beta^Y_*\circ\rho$ and the bottom row is exact. We are thus reduced to prove that the top sequence is short exact to conclude. It is straightforward to check that $ \tilde{\alpha} $ is injective and $ \tilde{\beta} $ is surjective. Moreover, the commutativity of the diagram and the fact that $\beta^Y_*\circ \alpha^Y_*=0$ imply that $\tilde\beta\circ\tilde\alpha=0$, so we are left to prove exactness in the middle. 

Let $x\in  \HH(K)\{\Gm(U_I^V)\}/J^\prime_{U^V_I}$. The group $\Gm(U_I^V)$ being generated by $\Gm(U_{I^\prime}^V)$ and $\phi_1$, we may use relations (2) and (4) to see that $x=(\phi_1)\tilde{\alpha}(x_1)+\tilde{\alpha}(x_0)$ in $\HH(K)\{\Gm(U_I^V)\}/J^\prime_{U^V_I}$. By Lemma \ref{lem:computation}, we get $\tilde{\beta}(x)=\tilde{\iota}(x_1)$, where $ \tilde{\iota} $ is induced by the restriction $ \Gm(U^V_{I'}) \xr{}\Gm(U^Y_{I_Y})$. Consequently, we need to prove that if $\tilde{\iota}(x_1)=0$ then $(\phi_1)\tilde{\alpha}(x_1)$ is in the image of $\tilde\alpha$. With this in mind, we now prove that the kernel of $\tilde{\iota}$ is generated by elements of the form
\[
R(f_0,\ldots,f_t)
\]
where $f_j=\lambda \phi_{i_j}$ with $i_j>1$ or $f_j=\lambda$ and $\sum_{i=0}^t f_i|_{U^Y_{I_Y}} =0$. Denote by $L^\prime$ the ideal of $ \HH(K)\{\Gm(U^V_{I'})\}$ generated by such elements. By construction, the restriction induces a homomorphism
\[
L^\prime+J^\prime_{U^V_{I'}}\to J^\prime_{U^Y_{I_Y}}
\]
which is surjective. Indeed, relations (1), (2) and ($3^\prime$) can be lifted using the fact that the map $\Gm(U^V_{I'})\to \Gm(U^Y_{I_Y})$ is surjective, while an element satisfying relation (4) with every $f_j$ of the form $f_j=\lambda_j\phi_{i_j}$ or $f_j=\lambda_j$ for $\lambda_j\in K^\times$ (with $i_j\neq 1$) lifts to an element in $L^\prime$. As in \cite[proof of Theorem 3.5]{math/0601737}, we see that the kernel of the group homomorphism $\Gm(U^V_{I'})\to \Gm(U^Y_{I_Y})$ is generated by elements of the form
\begin{enumerate}
\item $\lambda \frac {\phi_i}{\phi_j}$ with $i,j$ such that $Y_1\cap Y_i=Y_1\cap Y_j$ and $\lambda=\frac{(\phi_j)_{\vert Y_1}}{(\phi_i)_{\vert Y_1}}$.
\item $\lambda\phi_i$ where $i$ is such that $Y_1\cap Y_i=\emptyset$ and $\lambda=\frac 1{(\phi_i)_{\vert Y_1}}$.
\end{enumerate}
Remark \ref{Relation} yields $[\frac {\lambda\cdot \phi_i}{\phi_j}]=-\aBra{\phi_j\inv}R(\phi_j,-\lambda\cdot \phi_i)\subset L^\prime$, while $[\lambda\phi_i]=\epsilon R(-1,\lambda\cdot \phi_i)\subset L^\prime$ showing that $\ker (\Gm(U^V_{I'})\to \Gm(U^Y_{I_Y}))\subset L^\prime+J^\prime_{U^V_{I'}}$. We deduce that $\ker(\tilde{\iota})=L^\prime$.

We now conclude. If $\tilde{\iota}(x_1)=0$, then $x_1\in L^\prime$ and we may suppose that $x_1=R(f_0,\ldots,f_t)$ for $f_0,\ldots,f_t$ such that $\sum_{i=0}^t f_i|_{U^Y_{I_Y}} = 0 $. It follows that $\sum_{i=0}^t f_i = -\mu \phi_1$ for  $\mu \in K$. If $\mu=0$ there is nothing to do. Else, use $ R(\mu \phi_1,f_0,\ldots,f_t)=0 $ and Remark \ref{Relation} to get
	\begin{eqnarray*}
		&(\phi_1)\tilde{\alpha}(x_1)&= (\mu\phi_1)\tilde{\alpha}(x_1)-\aBra{\phi_1}(\mu)\tilde{\alpha}(x_1)\\
		&&= (\mu\phi_1)\tilde{\alpha}(x_1)+R(\mu \phi_1,f_0,\ldots,f_t)-\aBra{\phi_1}(\mu)\tilde{\alpha}(x_1)\\
		&&=\tilde{\alpha}(P(f_0,\ldots,f_t)) -\tilde{\alpha}(\aBra{\phi_1}(\mu)x_1) \in \mrm{image}(\tilde{\alpha}).
	\end{eqnarray*}
\end{proof}

\begin{corollary}
The graded ring isomorphism of Theorem \ref{thm:main} induces an isomorphism 
\[
\bigoplus_{n\in\bb{Z}}\mathrm{K}_n^{\rMW}(K)\{\Gm(U)\}/J_U\to \bigoplus_{n\in\bb{Z}}\mathrm{H}_{\rMW}^{n,n}(U,\bb{Z}).
\]
\end{corollary}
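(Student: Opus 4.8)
The plan is to extract the diagonal part (bidegrees $(n,n)$) of the isomorphism of Theorem \ref{thm:main}. First I would note that $\rho$ is a morphism of \emph{bigraded} $\HH(K)$-algebras, hence restricts to an isomorphism in every bidegree $(p,q)$; specializing to $p=q=n$ and summing over $n$ yields an isomorphism
\[
\bigoplus_{n\in\bb{Z}} \bigl(\HH(K)\{\Gm(U)\}/J_U\bigr)^{n,n} \xrightarrow{\ \cong\ } \bigoplus_{n\in\bb{Z}} \HH^{n,n}(U,\bb{Z}).
\]
It then remains to identify the left-hand side with $\bigoplus_{n}\mathrm{K}_n^{\rMW}(K)\{\Gm(U)\}/J_U$.

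The generators $(f)$, $f\in\Gm(U)$, live in bidegree $(1,1)$, while $\HH^{n,n}(K,\bb{Z})\cong\mathrm{K}_n^{\rMW}(K)$ via $s$ (fact (2) of the introduction). Since a word of length $k$ in the generators has bidegree $(k,k)$, a homogeneous element of $\HH(K)\{\Gm(U)\}$ of bidegree $(p,q)$ is an $\HH(K)$-combination of words whose coefficients lie in $\HH^{p-k,q-k}(K)$; reading off the diagonal, the subalgebra $\bigoplus_n \bigl(\HH(K)\{\Gm(U)\}\bigr)^{n,n}$ is exactly the free graded $\mathrm{K}_*^{\rMW}(K)$-algebra $\bigoplus_n \mathrm{K}_n^{\rMW}(K)\{\Gm(U)\}$. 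Next I would check that $J_U$ is a bigraded ideal with diagonal generators: $\eta$, $[-1]$ and $\epsilon=-1-\eta[-1]$ have bidegrees $(-1,-1)$, $(1,1)$ and $(0,0)$, so each relation (1), (2), $(3^\prime)$, $(4^\prime)$ is homogeneous and lies on the diagonal (for $(4^\prime)$ the relation $R(f_0,\ldots,f_t)$ is homogeneous of bidegree $(t,t)$, the factors $[-1]^k$ compensating exactly the $k+1$ omitted generators). Hence the quotient is bigraded and $\bigl(\HH(K)\{\Gm(U)\}/J_U\bigr)^{n,n}=\bigl(\HH(K)\{\Gm(U)\}\bigr)^{n,n}/J_U^{n,n}$.

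The one delicate point is to show that $J_U^{n,n}$ equals the degree-$n$ piece of the ideal generated by the relations \emph{inside} the diagonal algebra $\bigoplus_n\mathrm{K}_n^{\rMW}(K)\{\Gm(U)\}$, i.e. that no off-diagonal coefficients conspire to produce diagonal elements of the ideal. Here I would invoke the vanishing $\HH^{p,q}(K,\bb{Z})=0$ for $p>q$ (\cite[proof of Theorem 4.2.4]{Deglise16}), which forces $\HH(K)\{\Gm(U)\}$ to be concentrated in bidegrees with $p\leq q$. Given a diagonal relation $r$ of bidegree $(d,d)$ and a product $a\,r\,b$ of total bidegree $(n,n)$ with $a$ of bidegree $(p,q)$ and $b$ of bidegree $(p^\prime,q^\prime)$, one gets $p+p^\prime=q+q^\prime=n-d$ while $p\leq q$ and $p^\prime\leq q^\prime$; adding the inequalities $(q-p)+(q^\prime-p^\prime)=0$ with both summands nonnegative gives $p=q$ and $p^\prime=q^\prime$, so $a$ and $b$ are already diagonal. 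Thus $J_U^{n,n}$ is spanned by diagonal coefficients times the relations, as required.

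Combining the three identifications produces the claimed isomorphism. I expect the only genuine obstacle to be this last step, namely translating the below-diagonal concentration of $\HH(K)$ into the statement that the diagonal part of $J_U$ is computed entirely within $\bigoplus_n\mathrm{K}_n^{\rMW}(K)\{\Gm(U)\}$; the remaining verifications (bigradedness of the relations and the description of the diagonal of the free algebra) are straightforward bookkeeping with the bigrading.
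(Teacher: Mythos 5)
Your proposal is correct and takes essentially the same route as the paper: the paper's proof simply observes that $J_U$ is homogeneous with diagonal generators and extracts the diagonal of the isomorphism of Theorem \ref{thm:main}, identifying the diagonal of $\HH(K)$ with $\bigoplus_n\mathrm{K}_n^{\rMW}(K)$. The one point you elaborate --- that the $(n,n)$-part of $J_U$ is generated already inside the diagonal subalgebra, which you deduce correctly from the one-sided concentration $\HH^{p,q}(K,\bb{Z})=0$ for $p>q$ (the same vanishing the paper invokes elsewhere) --- is precisely what the paper's terse ``it follows'' leaves implicit, so your write-up is a sound, slightly more complete version of the paper's argument.
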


\begin{proof}
%We already know from Proposition \ref{prop:generation} $\tilde{s}$ is surjective, since that $ s $ factor through $ \tilde{s}$ . 

Notice that the ideal $J_U$ of Theorem \ref{thm:main} is homogeneous, and it follows that $\bigoplus_{n\in\bb{Z}}\mathrm{H}_{\rMW}^{n,n}(U,\bb{Z})$ can be computed as $\mathrm{H}_{\rMW}^{*,*}(K)\{\Gm(U)\}/J_U$, where $\mathrm{H}_{\rMW}^{*,*}(K)$ is the diagonal of $\HH(K)$.

%There is an obvious homomorphism $\mathrm{H}_{\rMW}^{*,*}(K)\{\Gm(U)\}\to \bigoplus_{n\in\bb{Z}}\mathrm{K}_n^{\rMW}(U)$ mapping any unit $(f)$ to $[f]\in \KMW_1(U)$ and it is straightforward to show that the relations $J_U$ are satisfied. We thus obtain a composite
%\[
%\bigoplus_{n\in\bb{Z}}\mathrm{K}_n^{\rMW}(U)\xr{s(U)} \bigoplus_{n\in\bb{Z}}\mathrm{H}_{\rMW}^{n,n}(U,\bb{Z})\to \bigoplus_{n\in\bb{Z}}\mathrm{K}_n^{\rMW}(U)
%\]
%which is the identity, proving the claim.
\end{proof}
	
\section{Combinatorial description}

In this section, we fix an affine space $V=\af^N$, a family of hyperplanes $I$ and we set $U:=U_I^N$. We let $Q(U)$ be the cokernel of the group homomorphism $ \Gm(K)\to \Gm(U)$, and we observe that the divisor map 
\[
\Gm(U)\xr{\mrm{div}} \oplus_{Y_i \in I} \bb{Z} \cdot Y_i
\] 
in $ \af^N $ induces an isomorphism $ Q(U)\cong \oplus_{Y_i \in I} \bb{Z} \cdot Y_i$. We consider the exterior algebra $\Lambda_{\bb{Z}} Q(U)$ and write $\Lambda_{\bb{Z}[\eta]/2\eta}Q(U):=\bb{Z}[\eta]/2\eta \otimes_{\bb{Z}}\Lambda_{\bb{Z}}Q(U)$. The abelian group $Q(U)$ being free, the $\bb{Z}[\eta]/2\eta$-module $\Lambda_{\bb{Z}[\eta]/2\eta}Q(U)$ is also free, with usual basis. To provide a combinatorial description of $ \HH(U)$, we will have to slightly modify the definition of the divisor map above, in order to incorporate the action of $\eta$. We then define a map 
\[
\Gm(U)\xr{\widetilde{\mrm{div}}} \Lambda_{\bb{Z}[\eta]/2\eta}Q(U)
\]
as follows:
\begin{enumerate}
	\item If $f=\lambda\phi$ or $f=\lambda$ where $\lambda \in \Gm(K)$ and $\phi$ is a linear polynomial as above, then $ \widetilde{\mrm{div}}(f) = \mrm{div}(f) $.
	\item If $f,g\in \Gm(U)$ then $\widetilde{\mrm{div}}(fg)= \widetilde{\mrm{div}}(f) + \widetilde{\mrm{div}}(g) + \eta \cdot \widetilde{\mrm{div}}(f) \wedge \widetilde{\mrm{div}}(g)$.
\end{enumerate}

\begin{lemma}\label{lem:tildediv}
The map $\widetilde{\mrm{div}}$ is well-defined.
\end{lemma}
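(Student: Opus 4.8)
The plan is to read rules (1)--(2) as the single assertion that $\widetilde{\mrm{div}}$ is a homomorphism from the multiplicative group $(\Gm(U),\cdot)$ to $(M,\ast)$, where $M:=\Lambda_{\bb{Z}[\eta]/2\eta}Q(U)$ and $a\ast b:=a+b+\eta\,a\wedge b$, whose values on the multiplicative generators of $\Gm(U)$ are prescribed by rule (1). Well-definedness then amounts to two things: that the part of $(M,\ast)$ containing all prescribed values is an abelian group, and that the assignment on generators is compatible with all relations in $\Gm(U)$.

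First I would analyze the operation $\ast$. Associativity is a direct computation using only associativity of $\wedge$ and centrality of $\eta$, and holds unconditionally. Commutativity is the crucial point: since $b\wedge a=-a\wedge b$, one has $a\ast b-b\ast a=2\eta\,a\wedge b$, which vanishes precisely because $2\eta=0$ in $\bb{Z}[\eta]/2\eta$ -- this is exactly why the ground ring is $\bb{Z}[\eta]/2\eta$ and not $\bb{Z}[\eta]$. The element $0$ is a two-sided unit. Note that $(M,\ast)$ is \emph{not} a group: on the degree-zero part $\bb{Z}[\eta]/2\eta$ the operation reads $a\ast b=a+b+\eta ab$, and a $\ast$-inverse of $a$ would require $(1+\eta a)$ to be invertible, which already fails for $1+\eta$. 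I would therefore restrict to the ideal $M^+\subset M$ of elements of strictly positive exterior degree. This ideal contains every prescribed value (the atoms $\lambda$ and $\lambda\phi_i$ map to $0$ and to $Y_i\in\Lambda^1$), and $\ast$ preserves $M^+$. On $M^+$ inverses do exist: for $a\in M^+$ the element $\sum_{k\ge 1}(-1)^k\eta^{k-1}a^{\wedge k}$ is a \emph{finite} sum, because $a^{\wedge k}$ has exterior degree $\ge k$ and so vanishes once $k$ exceeds $\mrm{rank}\,Q(U)$, and a short telescoping computation shows it is a $\ast$-inverse of $a$. Hence $(M^+,\ast)$ is an abelian group.

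With this group in hand, I would invoke the structure of $\Gm(U)$. Since $\af^N$ is factorial and the linear forms $\phi_1,\ldots,\phi_s$ are pairwise non-associate irreducibles, every unit on $U$ is uniquely of the form $\lambda\prod_i\phi_i^{n_i}$ with $\lambda\in K^\times$ and $n_i\in\bb{Z}$; that is, $\Gm(U)\cong K^\times\oplus\bigoplus_i\bb{Z}\cdot\phi_i$. I define the assignment on generators by $\lambda\mapsto 0$ for $\lambda\in K^\times$ and $\phi_i\mapsto Y_i$, which are exactly the values forced by rule (1). Because $(M^+,\ast)$ is an abelian group, because $\lambda\mapsto 0$ is a homomorphism out of $K^\times$, and because the $\phi_i$ are free generators, there is a unique group homomorphism $\Gm(U)\to(M^+,\ast)$ extending this assignment. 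Rule (2) is precisely its homomorphism property, and the two rules agree on their overlap, since an atom $\lambda\phi_i$ gives $\widetilde{\mrm{div}}(\lambda)\ast\widetilde{\mrm{div}}(\phi_i)=0\ast Y_i=Y_i=\mrm{div}(\lambda\phi_i)$. This homomorphism is the desired $\widetilde{\mrm{div}}$, and its existence and uniqueness are exactly the asserted well-definedness.

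The main obstacle is the verification that $(M^+,\ast)$ is an abelian group: commutativity is where the relation $2\eta=0$ is indispensable, and the existence of $\ast$-inverses rests on the nilpotency of the augmentation ideal of the exterior algebra, which is what makes the inverse series terminate. Once these facts are secured, together with the consistency of rules (1) and (2) on atoms of the form $\lambda\phi_i$, well-definedness follows immediately from the universal property of the direct-sum decomposition of $\Gm(U)$.
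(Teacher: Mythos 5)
Your proof is correct, and it takes a genuinely different route from the paper. The paper verifies well-definedness by hand: it first notes $\widetilde{\mrm{div}}(fg)=\widetilde{\mrm{div}}(gf)$ using $2\eta=0$, then, given two factorizations $f_1g_1=f_2g_2$, pulls out the powers of a fixed hyperplane $Y$, compares coefficients using the fact that $\Lambda_{\bb{Z}[\eta]/2\eta}Q(U)$ is free with its usual basis, and concludes by induction on the number of nontrivial factors. You instead package rules (1)--(2) as the assertion that $\widetilde{\mrm{div}}$ is a group homomorphism $\Gm(U)\to (M^+,\ast)$ with $a\ast b=a+b+\eta\, a\wedge b$, verify that $(M^+,\ast)$ is an abelian group (associativity unconditionally; commutativity from $2\eta=0$; inverses from nilpotence of the positive-degree ideal, the inverse series terminating because $Q(U)$ has finite rank), and then invoke the decomposition $\Gm(U)\cong K^\times\oplus\bigoplus_i\bb{Z}\cdot\phi_i$ coming from factoriality of $\af^N$ together with the universal property of this free decomposition. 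Your approach buys several things the paper leaves implicit: it handles negative exponents $\phi_i^{-1}$ cleanly (the paper's induction on factorizations never explicitly addresses $\ast$-inverses), it makes rule (2) for \emph{all} pairs $f,g$ automatic rather than something to be rechecked, and it isolates precisely where $2\eta=0$ is indispensable. One small imprecision worth fixing: you justify commutativity by ``$b\wedge a=-a\wedge b$'', which holds only for odd-degree homogeneous elements, whereas the values of $\widetilde{\mrm{div}}$ are generally inhomogeneous; for $a_p\in\Lambda^p$, $b_q\in\Lambda^q$ one has $b_q\wedge a_p=(-1)^{pq}a_p\wedge b_q$, so termwise $a\wedge b-b\wedge a$ is $0$ or $2\,a_p\wedge b_q$, and the conclusion $\eta(a\wedge b-b\wedge a)=0$ still follows from $2\eta=0$ -- the mechanism you cite is the right one, but the statement of the sign rule should be corrected.
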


\begin{proof}
We first notice that $\widetilde{\mrm{div}}(fg)=\widetilde{\mrm{div}}(gf)$, since
\[\widetilde{\mrm{div}}(fg)-\widetilde{\mrm{div}}(gf)=\eta \cdot \widetilde{\mrm{div}}(f) \wedge \widetilde{\mrm{div}}(g)-\eta \cdot \widetilde{\mrm{div}}(g) \wedge \widetilde{\mrm{div}}(f)=2\eta \cdot \widetilde{\mrm{div}}(f) \wedge \widetilde{\mrm{div}}(g) = 0.\]
	
Let $f_1,f_2,g_1,g_2\in \Gm(U)$ be such that $f_1g_1=f_2g_2$. Let $Y\in I$ be such that $f_i=Y^{n_i}\cdot f_i^\prime$ with $\mrm{div}_Y(f_i^\prime)=0$ and $g_i=Y^{m_i}\cdot g_i^\prime$ with $\mrm{div}_Y(g_i^\prime)=0$ for $i=1,2$ and $m_i,n_i\in\bb{Z}$. We get 
\[
\widetilde{\mrm{div}}(f_1g_1)=(m_1+n_1)\cdot Y+\widetilde{\mrm{div}}(f_1^\prime g_1^\prime)+(m_1+n_1)\eta (Y\wedge \widetilde{\mrm{div}}(f_1^\prime g_1^\prime))
\]
and 
\[
\widetilde{\mrm{div}}(f_2g_2)=(m_2+n_2)\cdot Y+\widetilde{\mrm{div}}(f_2^\prime g_2^\prime)+(m_2+n_2)\eta (Y\wedge \widetilde{\mrm{div}}(f_2^\prime g_2^\prime))
\]
As $\Lambda_{\bb{Z}[\eta]/2\eta}Q(U)$ is free with usual basis, we deduce that $\widetilde{\mrm{div}}(f_2^\prime g_2^\prime)=\widetilde{\mrm{div}}(f_1^\prime g_1^\prime)$ which allows to conclude by induction on the number of non-trivial factors in the decomposition of $f_1g_1$.
\end{proof}

Let now $L_U\subset \Lambda_{\bb{Z}[\eta]/2\eta}Q(U)$ be the ideal generated by the following elements:
\begin{enumerate}
\item $Y_1\wedge\ldots\wedge Y_s $, for $Y_i\in I$ such that $ Y_1\cap\ldots\cap Y_s =\emptyset $;
\item $\sum^s_{j=1}  (-1)^kY_1\wedge\ldots\wedge \widehat{Y_j}\wedge\ldots\wedge Y_s $, for $Y_i\in I$ such that  $ Y_1\cap\ldots\cap Y_s \neq \emptyset $ and $ \mrm{codim}(Y_1\cap\ldots\cap Y_s)<s $.
\end{enumerate}

As a consequence of Lemma \ref{lem:tildediv}, the map $\widetilde{\mrm{div}}$ induces a morphism of $\bb{Z}[\eta]/2\eta$-algebras
\[
\psi:(\bb{Z}[\eta]/2\eta)\{\Gm(U)\}\to \Lambda_{\bb{Z}[\eta]/2\eta}Q(U)/L_U
\]

It is now time to introduce the ring 
\[
A_0(U) := \mathrm{K}_*^{\rMW}(K)\{\Gm(U)\}/\left(J_U + K^{\times} \cdot \mathrm{K}_*^{\rMW}(K)\{\Gm(U)\} \right).
  \]
As $\epsilon=-1-[-1]\eta \sim -1 $ in $A_0(U)$, it follows that $A_0(U)$ is an exterior algebra. Moreover, the coefficient ring $\mathrm{K}_*^{\rMW}(K)$ can be reduced to $\mathrm{K}_*^{\rMW}(K)/(K^{\times} \cdot \mathrm{K}_*^{\rMW}(K) ) \cong \bb{Z}[\eta]/2\eta$.

\begin{proposition}
The morphism of $\bb{Z}[\eta]/2\eta$-algebras
\[
\psi:\bb{Z}[\eta]/2\eta\{\Gm(U)\}\to \Lambda_{\bb{Z}[\eta]/2\eta}Q(U)/L_U
\]
induces an isomorphism
\[
\Psi : A_0(U) \to \Lambda_{\bb{Z}[\eta]/2\eta}Q(U)/L_U.
\]
\end{proposition}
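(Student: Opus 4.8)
The plan is to construct an explicit two-sided inverse to $\Psi$. Fix the linear equations $\phi_1,\dots,\phi_s$ of the hyperplanes $Y_1,\dots,Y_s\in I$ and define a $\bb{Z}[\eta]/2\eta$-algebra homomorphism $\Phi:\Lambda_{\bb{Z}[\eta]/2\eta}Q(U)/L_U\to A_0(U)$ on generators by $Y_i\mapsto(\phi_i)$; I will then check that $\Psi$ and $\Phi$ are mutually inverse. The first thing to record is the structure of $A_0(U)$: since $\epsilon\sim-1$ and $[-1]\sim 0$ there, relations ($3'$) and (4) make the classes $(\phi_i)$ anti-commute and square to zero, while relation (2) together with $(\lambda)=0$ for $\lambda\in K^\times$ gives $(\phi_i^{a})=a(\phi_i)$ and, more generally, writes every $(f)$ with $f=\lambda\prod_i\phi_i^{a_i}$ as a $\bb{Z}[\eta]/2\eta$-polynomial in the $(\phi_i)$. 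Hence $A_0(U)$ is generated as a $\bb{Z}[\eta]/2\eta$-algebra by the finitely many classes $(\phi_i)$, and since $\Psi((\phi_i))=\widetilde{\mrm{div}}(\phi_i)=Y_i$ we get for free that $\Psi$ is surjective and that $\Phi$, once shown well defined, will be surjective as well.

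Next I would check that $\psi$ descends to $\Psi$, i.e.\ that it annihilates the (coefficient-reduced) generators of $J_U$. Relations (1) and (2) hold by the very definition of $\widetilde{\mrm{div}}$. For ($3'$) and (4) I would use the multiplicativity identity
\[ 1+\eta\,\widetilde{\mrm{div}}(fg)=\bigl(1+\eta\,\widetilde{\mrm{div}}(f)\bigr)\bigl(1+\eta\,\widetilde{\mrm{div}}(g)\bigr), \]
which yields the closed form $\widetilde{\mrm{div}}(f)=\sum_{\emptyset\neq S}\eta^{|S|-1}\bigl(\prod_{i\in S}a_i\bigr)Y_S$ for $f=\lambda\prod_i\phi_i^{a_i}$, where $Y_S$ is the ordered wedge of the $Y_i$, $i\in S$. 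Each bihomogeneous piece of $\widetilde{\mrm{div}}(f)\wedge\widetilde{\mrm{div}}(g)+\widetilde{\mrm{div}}(g)\wedge\widetilde{\mrm{div}}(f)$ has the shape $(1+(-1)^{|S||T|})\,\eta^{|S|+|T|-2}Y_S\wedge Y_T$; it vanishes either because $|S|$ and $|T|$ are both odd (then $1+(-1)^{|S||T|}=0$) or because one of them is even, forcing $|S|+|T|\ge 3$ and hence $2\eta^{|S|+|T|-2}=0$ as $2\eta=0$. The same bookkeeping gives $\widetilde{\mrm{div}}(f)\wedge\widetilde{\mrm{div}}(f)=0$, settling ($3'$) and (4).

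The main obstacle is the identification of relation ($4'$) with the ideal $L_U$, in both directions. The two families of generators of $L_U$ have clean geometric origins. If $Y_{i_1}\cap\cdots\cap Y_{i_s}=\emptyset$, the affine forms are inconsistent, so $\sum_j c_j\phi_{i_j}=1$ for suitable $c_j\in K$; applying relation (3) to $f_j=c_j\phi_{i_j}$ and using $(\lambda)=0$ gives $(\phi_{i_1})\cdots(\phi_{i_s})=0$, so $\Phi$ kills the first family. If instead the intersection is nonempty of codimension $<s$, the linear parts are dependent and, evaluating at a point of the intersection, $\sum_j c_j\phi_{i_j}=0$ identically; feeding $f_j=c_j\phi_{i_j}$ into relation ($4'$) should produce exactly the alternating boundary sum generating the second family, so $\Phi$ kills it too. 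Dually, for the descent one must show $\psi\bigl(R(f_0,\dots,f_t)\bigr)\in L_U$ whenever $\sum_i f_i=0$. I expect the cleanest way to control the required cancellations is to reduce modulo $\eta$: then $A_0(U)/\eta$ and $(\Lambda_{\bb{Z}[\eta]/2\eta}Q(U)/L_U)/\eta$ are the classical combinatorial model and the Orlik--Solomon algebra $\mathrm{OS}(U)$, and the matching of the $R$-relations with the boundary relations is precisely the computation carried out in \cite{math/0601737}; the remaining higher-$\eta$ contributions, coming from the non-homogeneity of $\widetilde{\mrm{div}}$, are $2$-torsion because $2\eta=0$ and are pinned down by the same expansion. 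This bookkeeping, matching the $\eta$-decorated $R$ with the $\eta$-free generators of $L_U$, is where the real work lies.

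Finally I would conclude by evaluating the composites on generators. On the target, $\Psi\circ\Phi(Y_i)=\Psi((\phi_i))=\widetilde{\mrm{div}}(\phi_i)=Y_i$; on $A_0(U)$, $\Phi\circ\Psi((\phi_i))=\Phi(Y_i)=(\phi_i)$. Since the $Y_i$ generate $\Lambda_{\bb{Z}[\eta]/2\eta}Q(U)/L_U$ and the $(\phi_i)$ generate $A_0(U)$, the two composites are the identity, so $\Psi$ is an isomorphism with inverse $\Phi$.
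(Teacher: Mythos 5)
Your overall architecture (descend $\psi$ to $\Psi$, define $\Phi$ on the $Y_i$ by $Y_i\mapsto(\phi_i)$, check both are well defined and inverse on generators) is the same as the paper's, and several pieces are sound: your verification of $\Phi$ on the two families of generators of $L_U$ matches the paper's, and your bihomogeneous bookkeeping for the anticommutativity and squaring relations via $\widetilde{\mrm{div}}(f)=\sum_{\emptyset\neq S}\eta^{|S|-1}\bigl(\prod_{i\in S}a_i\bigr)Y_S$ is actually more detailed than the paper's one-line appeal to the exterior algebra structure. But there is a genuine gap exactly where you admit ``the real work lies'': showing $\psi\bigl(R(f_0,\ldots,f_t)\bigr)\in L_U$ for arbitrary units $f_i\in\Gm(U)$ with $\sum_i f_i=0$. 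Your plan --- reduce modulo $\eta$, cite the Orlik--Solomon matching, and dismiss the remaining contributions as $2$-torsion --- does not close. Reduction modulo $\eta$ only yields $\psi(R)\in L_U+\eta\cdot\Lambda_{\bb{Z}[\eta]/2\eta}Q(U)$, and since $2\eta=0$ in the coefficient ring, \emph{every} element of the $\eta$-part is $2$-torsion; being $2$-torsion therefore gives no leverage toward membership in $L_U$ (e.g.\ $\eta\,Y_1\wedge Y_2$ is typically nonzero in the quotient and not in $L_U$). The $\eta$-corrections coming from the inhomogeneity of $\widetilde{\mrm{div}}$ are precisely the new content beyond the motivic case, and your proposal defers rather than performs that computation.

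The paper avoids this entirely by never confronting general $R$-relations: a byproduct of the proof of Theorem \ref{thm:main} is that $J_U=J^\prime_U$, where $J^\prime_U$ is generated by the relations on elements of the special form $f_j=\lambda_j\phi_{i_j}$ or $f_j=\lambda_j$ with $\lambda_j\in K^\times$ (the string of surjections $\HH(K)\aBra{\Gm(U)}/J^\prime_U\to\HH(K)\aBra{\Gm(U)}/J_U\to\HH(U)$ with bijective composite forces the first map to be an isomorphism). For such $f_j$ one has $\widetilde{\mrm{div}}(f_j)=Y_{i_j}$ or $0$, homogeneous of degree one, so no $\eta$-corrections appear at all; after killing $\sBra{-1}$ the relation $R(f_0,\ldots,f_t)$ collapses to the bare alternating sum $\sum_i(-1)^{i}\,Y_{i_0}\wedge\cdots\wedge\widehat{Y_{i_j}}\wedge\cdots\wedge Y_{i_t}$ (or a single monomial $Y_{i_1}\wedge\cdots\wedge Y_{i_t}$ when exactly one $f_j$ is constant, and $0$ when at least two are), which is visibly a generator of type (1) or (2) of $L_U$ by the case analysis on empty intersection versus $\mrm{codim}<t+1$. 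To repair your proposal, replace the mod-$\eta$ plan by this reduction $J_U=J^\prime_U$; the rest of your argument then goes through.
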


\begin{proof}
We first prove that $\Psi$ is well-defined, which amounts to show that the image of $J_U$ is contained in $L_U$. For $f\in K^\times$, we have $[f]\in K^\times \cdot \mathrm{K}_*^{\rMW}(K)\{\Gm(U)\}$ and $\widetilde{\mrm{div}}(f)=0$, showing that the first relation is satisfied. The second relation is satisfied by definition of $\widetilde{\mrm{div}}$, while relation ($3^\prime$) is satisfied as $ \Lambda_{\bb{Z}[\eta]/2\eta}Q(U)/L_U$ is an exterior algebra. As in the proof of Theorem \ref{thm:main}, we are then left with elements of $J^\prime_U$, i.e. elements of the form $R(f_0,\ldots,f_t)$ for $\sum_{i=0}^t f_i =0$, where $f_j=\lambda_j\phi_{j}$ or $f_j=\lambda_j$. Modulo $K^\times \cdot \mathrm{K}_*^{\rMW}(K)\{\Gm(U)\}$, we have $ R(f_0,\ldots,f_t) \sim \sum_{i=0}^t (-1)^{t+i}\sBra{f_0}\ldots \widehat{\sBra{f_i}} \ldots\sBra{f_t} $ and we just need to prove that
\[
\alpha:=(-1)^t\psi(R(f_0,\ldots,f_t))= \sum_{i=0}^t (-1)^{i}\widetilde{\mrm{div}} (f_0)\wedge\ldots \wedge\widehat{\widetilde{\mrm{div}}(f_i)}\wedge \ldots\wedge\widetilde{\mrm{div}}(f_t)
\]
is an element of $L_U$. Note that if there are more than two constant functions among $f_j$, $\alpha$ would be trivial. Suppose that $f_0=\lambda_0$ is the only constant, and let $f_j=\lambda_j \phi_j$ with kernel $Y_j\in I$, so that $\alpha =Y_1\wedge\ldots\wedge Y_t$. Since $\sum_{j=1}^t\lambda_j \phi_j=-\lambda_0\neq0$, we can easily get that $Y_1\cap\ldots\cap Y_t =\emptyset$ and $\alpha =Y_1\wedge\ldots\wedge Y_t \in L_U$. In the case where none of the $f_j$ are constant, $\alpha = \sum_{i=0}^t (-1)^{i}Y_0\wedge\ldots \wedge\widehat{Y_i}\wedge \ldots\wedge Y_t$. And for every $i$, we have$ \sum_{j=0,j\neq i}^t\lambda_j \phi_j=-\lambda_i\phi_i$, which means $Y_i\subseteq Y_0\cap\ldots\cap \widehat{Y_i} \cap\ldots\cap Y_t=Y_0\cap\ldots\cap Y_t$. If $Y_0\cap\ldots\cap Y_t = \emptyset $, so is $Y_0\cap\ldots\cap \widehat{Y_i} \cap\ldots\cap Y_t$, thus $Y_0\wedge\ldots \wedge\widehat{Y_i}\wedge \ldots\wedge Y_t \in L_U $; else, $\mrm{codim}(Y_0\cap\ldots\cap Y_t)=\mrm{codim}(Y_0\cap\ldots\cap \widehat{Y_i} \cap\ldots\cap Y_t)\leq t < t+1$, which just fits the condition (2) of $L_U$. This proves that $\Psi$ is well-defined.
	
To prove that $\Psi$ is an isomorphism, we construct the inverse map by
\[
\Phi : \Lambda_{\bb{Z}[\eta]/2\eta}Q(U)/L_U\to A_0(U), Y_i \mapsto (\phi_i) 
\] 
and prove that it is well-defined. As above, we just need to discuss elements of $L_U$. If $ Y_1\cap\ldots\cap Y_s =\emptyset $, then we can find $\lambda_i\in K^\times$ such that $\sum_i \lambda_i \phi_i =1$, and thus $(\phi_1)\cdots(\phi_s)\sim(\lambda_1\phi_1)\cdots(\lambda_s\phi_s)= 0 $ in $A_0(U)$. In the case $\mrm{codim}(Y_1\cap\ldots\cap Y_s)<s$, we have $\sum_i \lambda_i \phi_i =0$ for some $\lambda_i\in K^\times$. Then $\sum_{i=1}^s (-1)^i(\phi_1)\ldots \widehat{(\phi_i)} \ldots(\phi_s)\sim (-1)^{s-1}R(\lambda_1 \phi_1,\ldots,\lambda_s \phi_s) = 0$ in  $A_0(U)$. This shows that the inverse map is well-defined.
\end{proof}

The following corollary shows that the rank of the free $\HH(K)$-module $\HH(U)$ is exactly the same as the rank of the free $\mrm{H}_{\mathrm{M}}(K)$-module $\mrm{H}_{\mrm{M}}(U)$ \cite[Proposition 3.11]{math/0601737}.

\begin{corollary}
The rank of the free $\HH(K)$-module $\HH(U)$ is equal to the rank of the free module $\Lambda_{\bb{Z}}Q(U)/L_U $.
\end{corollary}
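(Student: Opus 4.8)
The plan is to compare the two ranks by reducing the coefficient ring of $\HH(U)$ in two stages, exploiting that the rank of a free module is detected after base change. Let $r$ denote the rank of the free $\HH(K)$-module $\HH(U)$. First I would read $r$ off the motivic decomposition $\Mt(U^V_I)\cong\oplus_{j\in J}\one(n_j)[n_j]$ established earlier, so that $r=|J|$. The crucial observation is that in each summand the Tate twist and the cohomological shift are \emph{equal}, both given by $n_j$; consequently the decomposition is compatible with restriction to the weight-equals-degree diagonal. Concretely $\HH^{n,n}(U,\bZ)=\oplus_j\HH^{n-n_j,n-n_j}(K,\bZ)=\oplus_j\mathrm{K}_{n-n_j}^{\rMW}(K)$, so the diagonal $\bigoplus_n\HH^{n,n}(U,\bZ)$ is a free module of rank $r$ over the diagonal $\bigoplus_n\HH^{n,n}(K,\bZ)=\mathrm{K}_*^{\rMW}(K)$. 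By the corollary preceding this section, this diagonal is identified with $\mathrm{K}_*^{\rMW}(K)\{\Gm(U)\}/J_U$.

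Next I would reduce the coefficient ring $\mathrm{K}_*^{\rMW}(K)$ down to $\bZ[\eta]/2\eta$. By definition of $A_0(U)$ and the identification above, $A_0(U)=\bigl(\bigoplus_n\HH^{n,n}(U,\bZ)\bigr)\otimes_{\mathrm{K}_*^{\rMW}(K)}\mathrm{K}_*^{\rMW}(K)/(K^\times)$, where $\mathrm{K}_*^{\rMW}(K)/(K^\times)\cong\bZ[\eta]/2\eta$. Since base change of a free module of rank $r$ is free of rank $r$, the ring $A_0(U)$ is free of rank $r$ over $\bZ[\eta]/2\eta$. Combining this with the isomorphism $A_0(U)\cong\Lambda_{\bZ[\eta]/2\eta}Q(U)/L_U$ of the preceding proposition, I conclude that $\Lambda_{\bZ[\eta]/2\eta}Q(U)/L_U$ is free of rank $r$ over $\bZ[\eta]/2\eta$.

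Finally I would pass from $\bZ[\eta]/2\eta$-coefficients to $\bZ$. The generators of $L_U$ involve no $\eta$, so $L_U$ is extended from the ideal generated by the same elements in $\Lambda_{\bZ}Q(U)$; since $\Lambda_{\bZ[\eta]/2\eta}Q(U)=\bZ[\eta]/2\eta\otimes_{\bZ}\Lambda_{\bZ}Q(U)$, right-exactness of $\bZ[\eta]/2\eta\otimes_{\bZ}(-)$ gives $\Lambda_{\bZ[\eta]/2\eta}Q(U)/L_U\cong\bZ[\eta]/2\eta\otimes_{\bZ}\bigl(\Lambda_{\bZ}Q(U)/L_U\bigr)$. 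Killing $\eta$, and using $(\bZ[\eta]/2\eta)/\eta\cong\bZ$, turns the free module of rank $r$ into $\bZ^r$ on one side and into $\Lambda_{\bZ}Q(U)/L_U$ on the other. Hence $\Lambda_{\bZ}Q(U)/L_U\cong\bZ^r$ is free of rank $r$, which is exactly the assertion.

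The step requiring the most care is the first one: verifying that the rank of $\HH(U)$ over $\HH(K)$ is preserved upon restricting to the diagonal, i.e. that $\bigoplus_n\HH^{n,n}(U,\bZ)$ is free over $\mathrm{K}_*^{\rMW}(K)$ of rank precisely $r=|J|$. This hinges on the equality of twist and shift in $\one(n_j)[n_j]$; had these differed, the diagonal would pick out only part of each coefficient summand and the two ranks need not coincide. The two subsequent reductions are purely formal, once one checks that $L_U$ is defined over $\bZ$ and that the base changes involved preserve freeness and rank.
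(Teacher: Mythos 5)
Your proposal is correct and takes essentially the same route as the paper: restrict to the diagonal $\bigoplus_n \HH^{n,n}(U,\bZ)$ (where the equality of twist and shift in the decomposition $\bigoplus_j \one(n_j)[n_j]$ makes the rank over $\KMW_*(K)$ agree with the rank of $\HH(U)$ over $\HH(K)$), base change along $\KMW_*(K)\to \bZ[\eta]/2\eta$ to identify this with the rank of $A_0(U)\cong \Lambda_{\bZ[\eta]/2\eta}Q(U)/L_U$, and finally kill $\eta$ to land on $\Lambda_{\bZ}Q(U)/L_U$. The only difference is that you make explicit the base-change and extension-of-scalars verifications (e.g.\ that $L_U$ is generated by $\eta$-free elements) which the paper compresses into the assertions that the two $\eta$-level ranks are ``clear''ly equal and that all generators lie in diagonal bidegree.
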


\begin{proof}
It is clear that $\mrm{rk}_{\bb{Z}[\eta]/2\eta}(\Lambda_{\bb{Z}[\eta]/2\eta}Q(U)/L_U)=\mrm{rk}_{\bb{Z}}(\Lambda_{\bb{Z}}Q(U)/L_U)$. As all generators in $\HH(U)$ are from $\HH^{p,p}(U,\bb{Z})$, we have
\[
\mrm{rk}_{\HH(K)}(\HH(U))=\mrm{rk}_{\KMW_*(K)}(\bigoplus_{n\in\bb{Z}}\HH^{n,n}(U,\bb{Z}))=\mrm{rk}_{\bb{Z}[\eta]/2\eta}(A_0(U))
\]
\end{proof}

\section{\textbf{I}-cohomology and singular cohomology}
In ordinary motivic cohomology theory, we have a realization functor to the topological cohomology of complex points. This yields the following comparative result.
\begin{proposition}{\cite[Proposition 3.9]{math/0601737}}
	In the case $K=\bb{C}$, there is an isomorphism of rings:
	\[ 
	\oplus_n \mrm{H}_{\mrm{M}}^{n,n}(U,\bb{Q}) \otimes_{\mrm{H}_{\mrm{M}}(K)} \mrm{K}^\mrm{M}_*(K)/ K^{\times}\cdot \mrm{K}^\mrm{M}_*(K) \xr{\cong} \oplus_n \mrm{H}^{n}_{\mrm{sing}}(U(\bb{C}),\bb{Q}). \]
\end{proposition}

In this section, we provide an analogue for the singular cohomology of the real points of the complement of a hyperplane arrangement defined over $\mathbb{R}$. We start with some results about the \textbf{I}-cohomology \cite{Fasel08a}.

As recalled in Section \ref{sec:MWmotivic}, we have natural homomorphisms from \MW motivic cohomology to $\mathbf{I}^*$-cohomology
\[
\HH ^{p,q}(X,\bZ)\to \mrm{H}^{p-q}(X,\mathbf{K}_q^{\mathrm{MW}})\to \mrm{H}^{p-q}(X,\mathbf{I}^q)
\]
which induce a ring homomorphism $ \HH(X)\to \oplus_{r,q}\mrm{H}^{r}(X,\mathbf{I}^q) $ (where $\mathbf{I}^q = \mathbf{K}_q^{\mathrm{MW}} =\mathbf{W}$ for $q<0$). In case $X=\mrm{Spec}(K)$, we obtain in particular a ring homomorphism $ \HH(K)\to \oplus_{r,q}\mrm{H}^{r}(K,\mathbf{I}^q) = \oplus_{q\in \bZ} \mathrm{I}^q(K)  $

\begin{proposition}
The morphism of $ \oplus_{q\in \bZ} I^q(K) $-algebras
\[
j: \HH(U)\otimes_{\HH(K)} \left(\oplus_{q\in \bZ} \mathrm{I}^q(K)\right) \to\oplus_{r,q}\mrm{H}^{r}(U,\mathbf{I}^q) 
\]
is an isomorphism. Moreover $ \mrm{H}^{r}(U,\mathbf{I}^q) =0 $, for $r\neq 0$.
\end{proposition}

\begin{proof}
Throughout the proof, we write $ \HH(U)\otimes \mathbf{I} $ for the graded ring $ \HH(U)\otimes_{\HH(K)} (\oplus_q \mathrm{I}^q(K))$. We follow the same induction process as in the proof of the main theorem. When $ |I|=0 $, we only need to consider $ \mrm{Spec}(K)$ by homotopy invariance, and the result is trivial.
	
	Assume now that $ Y\in I $ and that we have isomorphisms for $ U^V_{I'}$ and $ U^Y_{I_Y} $.
	Notice that for \textbf{I}-cohomology we still have a Gysin long exact sequence \cite[Remarque 9.3.5]{Fasel08a}. The proof of the main theorem yields the following commutative diagram:
	\[\small
	\xymatrix@C=1.0em{
		0\ar[r] & \HH(U^V_{I'})\otimes \mathbf{I}\ar[r]\ar[d]_-{\cong} & \HH(U^V_{I})\otimes \mathbf{I}\ar[r]\ar[d]_-{j} & \HH(U^Y_{I_Y})\otimes \mathbf{I}\ar[r]\ar[d]_-{\cong} & 0 \\
		\oplus_q\mrm{H}^{-1}(U^Y_{I_Y},\mathbf{I}^{q-1})\ar[r] & \oplus_q\mrm{H}^{0}(U^V_{I'},\mathbf{I}^q)\ar[r] & \oplus_q\mrm{H}^{0}(U^V_{I},\mathbf{I}^q)\ar[r] & \oplus_q\mrm{H}^{0}(U^Y_{I_Y},\mathbf{I}^{q-1})\ar[r] & \oplus_q\mrm{H}^{1}(U^V_{I'},\mathbf{I}^q) 
	 }
	\]
By our assumption, $ \mrm{H}^{-1}(U^Y_{I_Y},\mathbf{I}^{q-1})$ and $ \mrm{H}^{1}(U^V_{I'},\mathbf{I}^q)  $ are both $ 0 $, so the second line is also short exact. We conclude that $j$ is an isomorphism as well. The same argument implies that $ \mrm{H}^{r}(U^V_{I},\mathbf{I}^q)=0 $ for $ r\neq0 $. 
\end{proof}

The analogue of Corollary \ref{cor:free} in this setting then reads as follows.

\begin{corollary}\label{decompI}
There is a finite set $J$ and integers $n_j \geq 0$ for any $j\in J$ such that
\[
\mrm{H}^{0}(U^V_I,\mathbf{I}^q)\cong \oplus_{j\in J} \mrm{I}^{q-n_j}(K) b_j
\]
as a free $ \oplus_q \mrm{I}^q(K)$-module with basis $b_j\in \mrm{H}^{0}(U^V_I,\mathbf{I}^{n_j})$.
\end{corollary}

\begin{proof}
Every step is the same as in Proposition \ref{decomp}, except the splitting, which comes from the identification with $ \HH(U^V_{I})\otimes \mathbf{I} $. 
\end{proof}

As in \cite{Jacobson17} and \cite{hornbostel2019real}, we can compute the cohomology of the real spectrum using \textbf{I}-cohomology.

\begin{proposition}{\cite[Proposition 3.6]{hornbostel2019real}}
The signature map induces an isomorphism 
\[
\mrm{H}^{r}(X,\mrm{Colim}_{q\geq 0}\mathbf{I}^q)\xr{\mrm{sign}_{\infty}} \mrm{H}^{r}_{\mrm{sing}}(\mrm{Sper}(X),\bZ)  
\]
where $ \mrm{Sper}X $ is the real spectrum. In particular, 
\[
\mrm{Colim}_{q\geq 0} \mrm{I}^q(K)\cong\mrm{H}^{0}_{\mrm{sing}}(\mrm{Sper}(K),\bZ).
\]
\end{proposition}

In our case, since $ U $ is always noetherian and $ \mrm{Colim}_{q\geq 0} $ is filtered, we have a canonical isomorphism 
\[
\mrm{H}^{r}(U,\mrm{Colim}_{q\geq 0}\mathbf{I}^q) \cong \mrm{Colim}_{q\geq 0}\mrm{H}^{r}(U,\mathbf{I}^q).
\] 
Combining with Corollary \ref{decompI}, we obtain following proposition.
\begin{proposition}
There exists an integer $N>0$ such that 
\[
\mrm{H}^{0}(U^V_I,\mathbf{I}^N)\otimes_{\oplus_{q\geq 0} I^q(K)}\mrm{H}^{0}_{\mrm{sing}}(\mrm{Sper}(K),\bZ)\xr{2^{-N}\mrm{sign}} \mrm{H}^{0}_{\mrm{sing}}(\mrm{Sper}(U^V_I),\bZ)  
\]
is an isomorphism. Moreover $ \mrm{H}^{r}_{\mrm{sing}}(\mrm{Sper}(U^V_I),\bZ)=0 $ for $r\neq 0$.
\end{proposition}

\begin{proof}
As  discussed above, we can rewrite the right hand side as \\$ \mrm{Colim}_{q\geq 0}\mrm{H}^{0}(U^V_I,\mathbf{I}^q)$. Applying Corollary \ref{decompI}, we get 
\[ 
\mrm{Colim}_{q\geq 0} \left(\oplus_{j\in J} \mrm{I}^{q-n_j}(K) b_j\right) \cong \oplus_{j\in J} \left(\mrm{Colim}_{q\geq 0} \mrm{I}^{q-n_j}(K) b_j\right)\]
\[ \cong \oplus_{j\in J} \mrm{H}^{0}_{\mrm{sing}}(\mrm{Sper}(K),\bZ) b_j .
\]
Let $N\in\mathbb{N}$ be such that $\forall j\in J, N\geq n_j $. Using again Corollary \ref{decompI}, 
\[
\mrm{H}^{0}(U^V_I,\mathbf{I}^N)\cong \oplus_{j\in J} \mrm{I}^{N-n_j}(K) b_j
\] 
which implies  
\[ 
\oplus_{j\in J} \mrm{I}^{N-n_j}(K) b_j \otimes_{\left(\oplus_{q\geq 0} \mrm{I}^q(K)\right)}\mrm{H}^{0}_{\mrm{sing}}(\mrm{Sper}(K),\bZ) \cong \oplus_{j\in J} \mrm{H}^{0}_{\mrm{sing}}(\mrm{Sper}(K),\bZ) b_j
\]
since for every $j$ we have $N-n_j \geq 0 $. That proves the first part, while the second part is trivial.
\end{proof}

Taking $ K =\bb{R}$, we have $\mrm{H}^{0}_{\mrm{sing}}(\bb{R},\bZ)=\bZ$ and we recover the classical result for complements of hyperplane arrangements:
\[\mrm{H}^{0}(U^V_I,\mathbf{I}^N)\xrightarrow[\cong]{2^{-N}\mrm{sign}} \mrm{H}^{0}_{\mrm{sing}}(U^V_I(\bb{R}),\bZ)\cong \bigoplus_{R_i\in  \text{connected components}} \bb{Z}\{R_i\}. \]

\bibliography{references}
\bibliographystyle{unsrt}

\end{document}